 \newtheorem{theorem}{Theorem}[section]
 \newtheorem{fact}[theorem]{Fact}
 \newtheorem{lemma}[theorem]{Lemma}
\theoremstyle{definition}
 \newtheorem{definition}[theorem]{Definition}
\numberwithin{equation}{section}
\numberwithin{figure}{section}
\newcommand{\R}{\boldsymbol{R}}
\newcommand{\A}{\mathcal{A}}
\newcommand{\e}{\mathbf{e}}
\newcommand{\bb}{\mathbf{b}}
\newcommand{\n}{\mathbf{n}}
\newcommand{\hh}{\hat\kappa_2}
\newcommand{\hhh}{\hat\kappa_3}
\title{Geometry on curves which go around a Whitney umbrella}
\author{Masayuki Hara}
\date{\today}
\begin{document}
\maketitle

\footnote[0]{2020 Mathematics Subject classification.
Primary 57R45; Secondary 53A05.}
\footnote[0]{Keywords and Phrases. Whitney umbrella, normal developable
surface}

\begin{abstract}
We consider curves which go around Whitney umbrella.
Then we consider the geodesic and the normal curvatures,
ruled surfaces generated by the normal vector and
normal developable surfaces with respect to
the tangent and bi-tangent vectors of the curve.
Then we obtain several functions which represents
geometry on Whitney umbrella.
Looking at the top terms of them with respect to the radius,
we study geometry on Whitney umbrella.
\end{abstract}
\section{Introduction}
In this paper, we consider geometry on curves which
go around Whitney umbrella.
Whitney umbrella is known as a most frequently
appearing singularity on surfaces.
A map-germ $f:(\R^2,0)\to(\R^3,0)$ is a {\it Whitney umbrella}
if it is $\A$-equivalent to the map-germ $(u,v)\mapsto(u,v^2,uv)$ at the
origin, where two map-germs are said to be 
{\it $\A$-equivarent} 
if there exist diffeomorphism-germs $\varphi : (\R^2 ,0) \to (\R^2,0)$ and
$\psi : (\R^3,0) \to (\R^3,0)$ such that
$$g=\psi \circ f\circ \varphi^{-1} $$
holds.
It is also known that
unit normal vector field around a Whitney umbrella
cannot be smoothly extended beyond the singular point.
In \cite{FH}, it is shown that by taking the blow-up operation
on the source space of a Whitney umbrella, 
that is to say, taking the polar coordinate,
unit normal vector field
can be smoothly extended around the set which corresponds
to the singular point.
This fact strongly suggests that considering
a curve which goes around a Whitney umbrella,
then several geometric objects can be defined as
a direction-dependently.
In this paper, we consider
curves who has a constant radius with respect to
the polar coordinate around a Whitney umbrella.
Then we consider
the geodesic and the normal curvatures,
ruled surfaces generated by the normal vector and
normal developable surfaces with respect to
the tangent and bi-tangent vectors of the curve.
Then we obtain several functions which represents
geometry on Whitney umbrella depending on the angle
around the singular point, and the radius.
Looking at the top terms of them with respect to the radius,
we study geometry on Whitney umbrella.

\section{Preliminaries}

\subsection{A review of geometry on Whitney umbrella}
To see the differential geometric properties,
we use the following form to represent a Whitney umbrella.
\begin{theorem}[Bruce-West\cite{bw}, \cite{west}]\label{thm:bwnormal}
Let $g:(\bm{R}^2,0)\to (\bm{R}^3,0)$ be a Whitney umbrella.
Then, for any $ k \geq 3 $, there exist diffeomorphism-germ
 $\phi
 :(\bm{R}^2,0)\to (\bm{R}^2,0)$ and  $T\in SO(3)$
 such that
\begin{equation}\label{eq:bwnormal}
f_0=T\circ g\circ \phi  (u,v)
=\left(u,uv+B(v)+O(u,v)^{k+1},\sum_{j=2}^k
  A_j (u,v)+O(u,v)^{k+1}\right),
\end{equation}
   where
  $$B(v)=\sum_{i=3}^k \frac{b_i}{i!} v^i , A_j =\sum_{i=0}^j
  \frac{a_{i,j-i}}{i!(j-i)!} u^i v^{j-i} , a_{02} > 0 .$$
\end{theorem}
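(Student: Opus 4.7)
My plan is to normalize the germ $g$ jet by jet, using rotations in $SO(3)$ together with source diffeomorphisms. At the linear level, since $g$ is a Whitney umbrella, $dg_0$ has rank $1$ with image a line $L\subset\R^3$. I would pick $T_1\in SO(3)$ mapping $L$ to the $x$-axis, so that $T_1\circ g=(G_1,G_2,G_3)$ has $G_2,G_3\in\mathfrak{m}^2$ and $dG_1|_0\ne0$; here $\mathfrak{m}$ denotes the maximal ideal of the local ring at $0$. A source diffeomorphism $\phi_1$ then gives $T_1\circ g\circ\phi_1=(u,f_2,f_3)$ with $f_2,f_3\in\mathfrak{m}^2$. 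From this point onward I would restrict to rotations of the $(y,z)$-plane (the subgroup of $SO(3)$ fixing the $x$-axis, hence preserving $f_1=u$) and source diffeomorphisms of the shape $(u,v)\mapsto(u,\psi(u,v))$.

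For the quadratic part, write $f_2=\alpha_0 u^2+\alpha_1 uv+\alpha_2 v^2+O(\mathfrak{m}^3)$ and analogously $f_3$ with coefficients $\beta_i$. The Whitney umbrella hypothesis forces $(\alpha_2,\beta_2)\ne(0,0)$ (otherwise the singular locus would contain a whole curve). An appropriately oriented rotation in the $(y,z)$-plane then brings $(\alpha_2,\beta_2)$ to $(0,r)$ with $r>0$, yielding the sign condition $a_{02}>0$; the same hypothesis forces the resulting $\alpha_1$ to be nonzero. A linear source change $(u,v)\mapsto(u,\mu v+\nu u)$ with $\mu\ne0$ then normalizes the $uv$-coefficient of $f_2$ to $1$ and kills its $u^2$-coefficient, giving $f_2\equiv uv$ and $f_3\equiv A_2\pmod{\mathfrak{m}^3}$.

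For higher orders I would induct on $j\in\{3,\ldots,k\}$. Assume the required form holds modulo $\mathfrak{m}^j$. The degree-$j$ part of $f_2$ is $\sum_{i=0}^j c_i u^iv^{j-i}$, of which only $c_0v^j$ is admissible. A source diffeomorphism $\phi_j(u,v)=(u,v+\eta)$ with $\eta$ homogeneous of degree $j-1$ shifts $f_2$ at order $j$ by $(f_2)_v\cdot\eta$; since the inductive hypothesis gives $(f_2)_v\equiv u\pmod{\mathfrak{m}^2}$, this correction equals $u\eta$ and ranges over all degree-$j$ monomials divisible by $u$. Choosing $\eta$ to cancel $c_1,\ldots,c_j$ and setting $b_j/j!=c_0$ completes the step for $f_2$. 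Simultaneously $f_3$ is modified at order $j$, but this merely defines $A_j$ and is subject to no further constraint; lower orders are preserved because $\eta\in\mathfrak{m}^{j-1}$ only perturbs at orders $\ge j$.

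The main obstacle I anticipate is the order-$2$ step, where the four normalizations $\alpha_0=0$, $\alpha_1=1$, $\alpha_2=0$, and $a_{02}>0$ must be achieved simultaneously using only a determinant-$+1$ rotation; a half-turn in the $(y,z)$-plane can absorb a sign discrepancy if needed. The subsequent inductive steps are then a routine degree-by-degree killing argument, facilitated by $(f_2)_v\equiv u$ making the relevant obstruction space trivial.
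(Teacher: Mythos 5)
The paper does not prove this statement at all: it is quoted as Theorem~\ref{thm:bwnormal} directly from Bruce--West \cite{bw} and West's thesis \cite{west}. Your jet-by-jet normalization (rotate the target so the image of $dg_0$ is the $x$-axis, straighten the first component to $u$, normalize the $2$-jet with an $SO(2)$-rotation of the $(y,z)$-plane plus a linear source change, then inductively kill the $u$-divisible degree-$j$ monomials of the second component with shears $(u,v)\mapsto(u,v+\eta)$, using $(f_2)_v\equiv u \bmod \mathfrak{m}^2$) is exactly the standard route to this normal form, and the inductive part is sound; note also, as you implicitly use, that $\eta\in\mathfrak{m}^{j-1}$ perturbs $f_3$ only in orders $\ge j\ge 3$, so $a_{02}>0$ survives the induction.

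There is, however, one genuinely flawed step: the justification of the $2$-jet nondegeneracy. You claim that $(\alpha_2,\beta_2)\ne(0,0)$ (and likewise $\alpha_1\ne 0$ after the rotation) because ``otherwise the singular locus would contain a whole curve.'' That implication is false. For example, $h(u,v)=(u,\,uv,\,v^3+u^2v)$ has $h_v=(0,u,3v^2+u^2)$, so its singular locus is the single point $\{(0,0)\}$, yet its quadratic parts have no $v^2$-terms at all, i.e.\ $(\alpha_2,\beta_2)=(0,0)$; similarly $(u,\,v^3\pm u^2v,\,v^2)$ has an isolated singular point but $\alpha_1=0$. So an isolated singular locus does not force the conditions you need, and these germs are precisely non-cross-cap corank-one singularities. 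What actually gives you $(\alpha_1,\beta_1)$ and $(\alpha_2,\beta_2)$ linearly independent (hence, after the rotation, $\alpha_2=0$, $\beta_2>0$, $\alpha_1\ne0$) is Whitney's recognition criterion for the cross-cap: with $\partial_v$ spanning $\operatorname{Ker}df_0$, the germ is a cross-cap if and only if $f_u(0)$, $f_{uv}(0)$, $f_{vv}(0)$ are linearly independent. This criterion (or a direct verification that the linear-independence condition on the $2$-jet is preserved under $\A$-equivalence and holds for $(u,uv,v^2)$) must replace your singular-locus argument; once it is in place, the rest of your degree-by-degree argument goes through and reproduces the Bruce--West normal form with $a_{02}>0$.
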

Here, $O(u,v)^{k+1}$ stands for a function whose order is
geater or equal to $k+1$.
We denote by $f_0$ the right-hand side of \eqref{eq:bwnormal},
and 
is called a {\it normal form of Whitnet umbrella}.
Since we use an element $T$ of $SO(3)$ as a diffeomorphism on the
target space, $f_0$
has the same difarential geometric properties
with $g$. Thus, all coefficients of $B(v)$ and $A_j(u,v)$ are difarential
gemetric invariants of given the Whitney umbrella $g$.
By using this form, geometry on Whitney umbrella
is studied by several authors, see \cite{bw,FH,hhnuy,sym,
hnuy,MN,tari}, for example.
Furtheremore, we have the following. 
\begin{lemma}\label{a02}
By a diffeomorphic transformation on the source space and
a scaling on the target space,
$f_0$ is transformed into the form \eqref{eq:bwnormal} with
$a_{02} = 1$. 
\end{lemma}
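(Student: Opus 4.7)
The plan is to realize the normalization $a_{02}=1$ using a single homothety on the target (``a scaling'') compensated by a linear rescaling of the $u$-coordinate on the source, chosen so that the Bruce--West structural shape of \eqref{eq:bwnormal} is preserved. The heart of the proof will be matching the available scaling freedoms against the coefficient constraints imposed by the normal form; once this is done correctly, the verification reduces to direct substitution.

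First I would set $\lambda:=a_{02}>0$ and try the pair $\phi(u,v)=(u/\lambda,v)$ (so $\phi^{-1}(u,v)=(\lambda u,v)$) and $\psi(X,Y,Z)=(X/\lambda,Y/\lambda,Z/\lambda)$. Both are linear diffeomorphisms, so $\tilde{f}_0:=\psi\circ f_0\circ\phi^{-1}$ represents the same $\A$-equivalence class as $f_0$. Direct substitution shows that the first slot $\lambda u$ of $f_0(\lambda u,v)$ becomes $u$ under $\psi$; the second slot $\lambda uv+B(v)+O(u,v)^{k+1}$ becomes $uv+B(v)/\lambda+O(u,v)^{k+1}$, still of the required shape because $B(v)/\lambda$ depends on $v$ alone and has order $\geq 3$; and in the third slot
\[
\frac{1}{\lambda}A_j(\lambda u,v)=\sum_{i=0}^{j}\frac{\lambda^{i-1}a_{i,j-i}}{i!(j-i)!}\,u^{i}v^{j-i},
\]
so the new coefficients are $\tilde{a}_{i,j-i}=\lambda^{i-1}a_{i,j-i}$, and in particular $\tilde{a}_{02}=a_{02}/\lambda=1>0$ as required.

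The hard part (which is not very hard) is seeing that $v$ really cannot also be rescaled: a general source rescaling $(u,v)\mapsto(\alpha u,\beta v)$ combined with a uniform target homothety of factor $\mu$ yields three parameters against three constraints (coefficient $1$ for $u$ in the first slot, coefficient $1$ for $uv$ in the second, coefficient $\tfrac12$ for $v^2$ in the third), and solving these forces $\beta=1$ and $\alpha=\mu=a_{02}$, which is exactly the choice above. Beyond this small bookkeeping observation there is no real obstacle: preservation of the remaining structural features of \eqref{eq:bwnormal} (the $B$-term depending only on $v$, the $A_j$ being polynomials of the stated shape) under this pair of scalings is immediate.
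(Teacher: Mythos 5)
Your proposal is correct and is essentially the paper's own argument: the paper also divides the right-hand side of \eqref{eq:bwnormal} by $a_{02}$ and applies the source change $u\mapsto a_{02}u$, which is exactly your pair $(\phi,\psi)$, yielding $\tilde a_{20}=a_{02}a_{20}$ as in the paper's relabeling. Your extra verification of the coefficient bookkeeping (and the observation that $v$ need not be rescaled) is consistent with, and slightly more detailed than, the paper's proof.
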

\begin{proof}
We divide the right-hand side of \eqref{eq:bwnormal} by $a_{02} \neq 0$, 
and take the coordinate transformation $u \mapsto a_{02} u$.
Then replacing $a_{20} a_{02} \mapsto a_{20}$, we have the assertion. 
\end{proof}
Following \cite[Section 2.3]{FH}, we consider 
the brow-up by a domain on a Whitney umbrella, 
we see the unit normal vector field near a Whitney umbrella 
can be continuously extended beyond the singular point.
Let $\mathcal{M}$ be the quotient space of
the product set
$\bm{R} \times S^1$ by the equivalence relation
$(r,\theta)\sim (-r,\theta +\pi)$. 
The map
$$\pi: \mathcal{M}\to \bm{R}^2 , 
[(r,\theta)]\mapsto (r\cos\theta
,r\sin\theta )$$
is called the {\it brow-up by a domain}, or {\it brow-up} for short. 
The set $\{r = 0\}$ corresponds to the origin.  

Although unit normal vector field near a Whitney umbrella 
cannot be continuously extended,
it is known that by this operation blow-up by a domain,
one can extend
a unit normal vector field including $\{r=0\}$ on $\mathcal{M}$:
\begin{theorem}[\cite{FH}]
Let $f_0$ be the right-hand side of
 \eqref{eq:bwnormal}, and let
 $\mathbf{n}$ be a unit normal vector of $f$ which is defined
 by outside of the origin. Then, 
$\tilde{\mathbf{n}}(r,\theta)=\mathbf{n}\circ \pi$
 can be smoothly extended on the set $\{r=0\}$ as
 \begin{equation}\label{unwu}
   \tilde{\mathbf{n}}(0,\theta)=\frac{1}{\mathcal{A}}
  (0,-a_{11}\cos\theta -a_{02} \sin \theta ,
  \cos \theta),
  \end{equation}
 where  $\mathcal{A}=\mathcal{A}(\theta)=\sqrt{\cos^2 \theta +(a_{11}
 \cos
  \theta +a_{02} \sin \theta )^2} (\neq 0)$. 
\end{theorem}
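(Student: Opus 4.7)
The plan is to compute the unnormalized normal $\mathbf{N} := (f_0)_u \times (f_0)_v$, pull it back via $\pi$, factor out one power of $r$, and read off the limit as $r \to 0$. Since $\mathbf{n} = \mathbf{N}/|\mathbf{N}|$ away from the origin, the theorem reduces to showing that $\mathbf{N}\circ\pi$ is exactly divisible by $r$ with a smooth quotient whose norm stays nonzero on $\{r = 0\}$.

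First I would differentiate \eqref{eq:bwnormal}. Writing $(f_0)_u = (1,\alpha_1,\beta_1)$ and $(f_0)_v = (0,\alpha_2,\beta_2)$ with
\[
\alpha_1 = v + O(u,v)^{k},\ \beta_1 = a_{20}u + a_{11}v + O(u,v)^2,\ \alpha_2 = u + O(v^2),\ \beta_2 = a_{11}u + a_{02}v + O(u,v)^2,
\]
the cross product takes the form $\mathbf{N} = (\alpha_1\beta_2 - \beta_1\alpha_2,\ -\beta_2,\ \alpha_2)$. A short expansion in which the two $a_{11}uv$ contributions cancel yields
\[
\mathbf{N} = \bigl(a_{02}v^2 - a_{20}u^2 + O(u,v)^3,\ -(a_{11}u + a_{02}v) + O(u,v)^2,\ u + O(u,v)^2\bigr).
\]

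Substituting $(u,v) = (r\cos\theta,r\sin\theta)$, the first component becomes $O(r^2)$ while the other two are $O(r)$, so $\mathbf{N}\circ\pi = r\,\widetilde{\mathbf{N}}(r,\theta)$ with $\widetilde{\mathbf{N}}$ smooth on a neighborhood of $\{r=0\}$ and
\[
\widetilde{\mathbf{N}}(0,\theta) = (0,\ -a_{11}\cos\theta - a_{02}\sin\theta,\ \cos\theta).
\]
Its norm at $r=0$ is exactly $\mathcal{A}(\theta)$, which never vanishes: if $\mathcal{A}(\theta) = 0$ then $\cos\theta = 0$ forces $\sin\theta = \pm 1$, and then $a_{11}\cos\theta + a_{02}\sin\theta = \pm a_{02} \neq 0$ (using $a_{02} > 0$), a contradiction. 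Hence $|\widetilde{\mathbf{N}}|$ is smooth and nowhere zero on a neighborhood of $\{r=0\}$, so $\tilde{\mathbf{n}} = \widetilde{\mathbf{N}}/|\widetilde{\mathbf{N}}|$ extends smoothly to $\{r=0\}$ with the claimed value.

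The only delicate point is the cancellation of the $a_{11}uv$ terms in the first component of $\mathbf{N}$: without it the first coordinate would survive at order $r$ after the substitution, and the limiting normal would acquire a spurious nonzero first component. Once this cancellation is verified, the rest is bookkeeping on Taylor expansions together with the routine fact that the quotient of a smooth function by a smooth nowhere-zero function is smooth.
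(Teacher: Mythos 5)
Your argument is correct: the paper gives no proof of this statement (it is quoted from \cite{FH}), and your computation---expanding $(f_0)_u\times(f_0)_v$, observing the cancellation of the two $a_{11}uv$ terms in the first component, factoring out exactly one power of $r$ after substituting $(u,v)=(r\cos\theta,r\sin\theta)$, and normalizing by the nowhere-vanishing $\mathcal{A}(\theta)$---is precisely the standard blow-up argument behind the cited result. The only step worth making explicit is that divisibility by $r$ with a \emph{smooth} (not merely continuous) quotient follows from Hadamard's lemma applied to the components of $\mathbf{N}$, which are smooth and vanish at the origin; with that remark included, the proof is complete.
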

\subsection{Ruled surfaces}\label{sec:ruled}
Let $\gamma:(\R,0)\to(\R^3,0)$ be a curve,
and let $\delta:(\R,0)\to\R^3$ be a curve satisfying
$|\delta|=1$.
A surface $f(t,\beta)=\gamma(t)+\beta\delta(t)$ is called 
a {\it ruled surface}.
The curve $\gamma$ is called a {\it base curve}, and
$\delta$ a {\it director curve}.
The straight lines $t\mapsto \gamma(t)+\beta\delta(t)$
is called the {\it ruling}.
A ruled surface $f(t,\beta)=\gamma(t)+\beta\delta(t)$ is said to be 
{\it non-cylindrical} if $\delta'\ne0$ for any $t$.
If $f$ is a non-cylindrical ruled surface,
the surface
$\sigma(t)+\beta\delta(t)$ has the same image with $f$,
where
$\sigma=\gamma-(\gamma'\cdot\delta'/(\delta'\cdot\delta'))\delta$.
The curve $\sigma$ is called the {\it striction curve}
of $f$. See \cite[Section 14]{gray} for detail.
It is known that the singular values of a non-cylindrical ruled surface
are located on the striction curve (\cite[Lemma 2.2]{iztake}).
If a ruled surfaces has constant zero Gaussian curvature,
it is called a {\it developable surface}.
If $\delta'\equiv0$, then $f$ is called a {\it cylinder}.
If a non-cylindrical ruled surface satisfies $\sigma'\equiv0$,
where $\sigma$ is the striction curve,
then $f$ is called a {\it cone}.
Here $\equiv$ stands for the equality holds identically.
It is known that developable surface is locally classified into
cylinders, cones and tangent developable surfaces.
If $\delta$ is parallel to $\gamma'$, the ruled surface
$f(t,\beta)=\gamma(t)+\beta\delta(t)$ is called a
{\it tangent developable surface}.

\subsection{Height function along a curve and normal developable
  surfaces}
  \label{sec:nds}
Following \cite{izohflat}, we introduce 
the normal developable surfaces along a curve.
Let $\gamma:(\R,0)\to(\R^3,0)$ be a curve.
We assume that there exists a unit vector field
$\e$ along $\gamma$ such that $\gamma'=l\e$, where $l$ is a function.
Let $\n$ be a unit vector field along $\gamma$ which
satisfies $\e\cdot \n=0$.
We set $\bb=-\e\times \n$.
We define three functions $\kappa_1,\kappa_2,\kappa_3$ by the following
Frenet-Serre type formula:
   \begin{align}\label{eq:kappa123}
    \frac{d}{dt}
   \begin{pmatrix}
    \mathbf{e} \\
    \mathbf{b} \\
    \mathbf{n}
   \end{pmatrix}
    =
       \begin{pmatrix}
    0 & \kappa_1 & \kappa_2 \\
    - \kappa_1 & 0 & \kappa_3\\
    - \kappa_2 & - \kappa_3 & 0
   \end{pmatrix}
       \begin{pmatrix}
    \mathbf{e} \\
    \mathbf{b} \\
    \mathbf{n}
   \end{pmatrix}.
   \end{align}
We assume that $\n'\ne0$, namely
$(\kappa_2,\kappa_3)\ne(0,0)$ for any $t$.
Let us consider the function
$H(t,X) : (\R \times \R^3, 0) \to (\R,0)$ defined by
$$H(t , X) = \mathbf{n} \cdot (X - \gamma (t)).$$
Then by 
$H_t = (- \kappa_2 \mathbf{e} - \kappa_3
 \mathbf{b})(X - \gamma)$,
under the assumption $(\kappa_2,\kappa_3)\ne(0,0)$,
the discriminant set 
$$
{\cal D}_H =\{X\in\R^3\,|\,\text{there exists }t\text{ such that }
H(t,X) = H_t(t,X) = 0\}
$$
is parameterized by
$$
(t,\beta)\mapsto
\gamma + \frac{\beta}{\kappa_2} \sqrt{{\kappa_2}^2 + {\kappa_3}^2}
  \dfrac{(\kappa_3 \e - \kappa_2
  \bb)}{\sqrt{{\kappa_2}^2 + {\kappa_3}^2}},
$$
Setting $D=(\kappa_3 \e - \kappa_2\bb) / \sqrt{{\kappa_2}^2 + {\kappa_3}^2}$
and re-setting
$\beta \sqrt{{\kappa_2}^2 +{\kappa_3}^2} / \kappa_2\mapsto \beta$,
we obtain a ruled surface
$h(t, \beta) = \gamma(t) + \beta D(t)$.
By the construction, $h$ is a developable surface.
We set
\begin{equation}
    \delta = \kappa_1 (\hat{\kappa_2}^2 + \hat{\kappa_3}^2 ) - \hat{\kappa_2} '
    \hat{ \kappa_3 } + \hat{ \kappa_2} \hat{\kappa_3} ',
   \end{equation}
where 
$\hat{\kappa_i} = l \kappa_i$ $(i = 2, 3)$.
Calculating $D'$, we have
$$
D' =\frac{\delta( \hh \e + \hhh \bb)}{({\hh}^2 + {\hhh}^2)^{\frac{3}{2}}}.
$$
Thus $h$ is non-cylindrical if and only if
$\delta\ne0$ for any $t$.
Under the assumption $\delta\ne0$ for any $t$,
the striction curve $\sigma$ of $h$ is given by
$$\sigma = \gamma - \frac{l \hh}{\delta} (\hhh \e - \hh \bb).$$
By a direct calculation, we have
$$\sigma ' = \frac{k}{\delta^2} (\hhh \e - \hh \bb),$$
where
$$k = \delta (l \kappa_1 \hhh -2 l \hh ' - l' \hh ) + l \hh \delta' .$$
Thus $h$ is a cone if and only if
$\delta\ne0$ and $k=0$ for any $t$.

\section{Geometry on a curve which goes around a Whitney umbrella}
The unit normal vector has been defined by the blow-up, 
the geodesic and the normal curvatures at the origin can be defined
in a direction-dependent manner even at the origin.
In this section, we consider them.
\subsection{Geodesic curvature}
Let $f=f_0$ be the right-hand side of \eqref{eq:bwnormal}
and let 
  \begin{equation}\label{def:gamma}
   \gamma (\theta)= (u,v) = (r \cos \theta ,
  r \mathrm{sin} \theta)
  \end{equation}
 be a curve that $r$ is a constant. We consider the curve
\begin{align*}
  f\circ \gamma(\theta) &=f(r \cos \theta , r \sin \theta) \\
              & = \Big(r\cos \theta , r^2 \sin \theta
   \cos \theta + 
 \cdots , \\
& \hspace{1cm} \frac{a_{20}}{2} r^2 \cos^2 \theta + a_{11} r^2
   \sin \theta \cos \theta + \frac{a_{02}}{2}
 r^2 \sin^2 \theta + \cdots \Big).
  \end{align*}
We calculate the geodesic and the normal curvatures of the curve
$\theta\mapsto f \circ \gamma(\theta)$.
Which are functions of $(r,\theta)$, we set them
$\kappa_g(r,\theta)$ and $\kappa_n(r,\theta)$, respectively.
Since  the normal form is unique, the set 
$\{(r \mathrm{cos} \theta , r \mathrm{sin} \theta)\,|\,\theta\in[0,2\pi)\}$ 
has an absolute meaning.
\begin{definition}\label{def:firstterm}
Let $x(r,\theta)$ be a function of $(r,\theta)$.
If $x$ has the form
$$
x(r,\theta)=x_f(\theta)x_0(\theta)\,r^i+O(r^{i+1}),
$$
where $O(r^{i+1})$ stands for a function that is the order of $i+1$.
then the function $x_f(\theta)x_0(\theta)$ is called the
{\it first term} of $x$.
If $x_0(\theta)$ does not vanish for any $\theta$, then
the function $x_f(\theta)$ is also called the
{\it first term} of $x$.
\end{definition}

We have the following.
\begin{theorem}\label{kappag}
  If $\sin \theta \not= 0$, then the geodesic curvature $\kappa_g
 (r,\theta)$ is a $C^{\infty}$-function.
 Moreover, we have 
 \begin{align}
 \kappa_g(r,\theta) 
  = \frac{|\sin \theta|}{\mathcal{A}}g(s)
 + O(r),\label{eq:kgformula}
 \end{align}
 where
 \begin{equation}\label{eq:g}
  g=g(s)=
({a_{11}}^2 + 1) s^4 + a_{11}
 a_{02} s^3 + 3({a_{11}}^2 + 1) s^2 
+ a_{11}(4 a_{02} - a_{20}) s + a_{02} (a_{02} -
 a_{20}),
 \end{equation}
$s =\cos\theta/\sin\theta$.
\end{theorem}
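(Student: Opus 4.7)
The plan is to compute $\kappa_g$ directly from the standard formula
\[
\kappa_g \;=\; \frac{\n\cdot(c'\times c'')}{|c'|^3}, \qquad c(\theta)\;=\;f(r\cos\theta,\,r\sin\theta),
\]
where primes denote $d/d\theta$ and $\n$ is the unit normal of $f$ pulled back along $\gamma$ and smoothly extended through $r=0$ using \eqref{unwu}.

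For the smoothness claim on $\{\sin\theta\ne 0\}$, note that $c_1(\theta)=r\cos\theta$ contributes $(c_1')^2=r^2\sin^2\theta$ while $c_2',\, c_3'$ are of order $r^2$, so $|c'|^2=r^2\sin^2\theta+O(r^4)$. Hence $|c'|/r$ extends smoothly in $(r,\theta)$ and is nonvanishing where $\sin\theta\ne0$. An analogous order count shows each component of $c'\times c''$ is divisible by $r^3$ as a smooth function of $(r,\theta)$; combined with the smooth extension of $\n$, the ratio $\kappa_g$ is thereby $C^{\infty}$ on $\{\sin\theta\ne 0\}$.

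For the leading term, since $(c'\times c'')_1$ is of order $r^4$ while $(c'\times c'')_2,\,(c'\times c'')_3$ are of order $r^3$, and the first component of $\n_0(\theta)$ vanishes by \eqref{unwu}, only $\n_0$ paired with the $r^3$-parts of $(c'\times c'')_2$ and $(c'\times c'')_3$ contributes to the leading order. I would substitute $c_2=r^2\sin\theta\cos\theta+O(r^3)$ and $c_3=\tfrac{r^2}{2}(a_{20}\cos^2\theta+2a_{11}\sin\theta\cos\theta+a_{02}\sin^2\theta)+O(r^3)$, differentiate twice, form the two relevant cross-product components, dot with $\n_0$ and divide by $|c'|^3=r^3|\sin\theta|^3+O(r^5)$. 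The identity $\sin^4\theta/|\sin\theta|^3=|\sin\theta|$ then rewrites the result in the stated form $(|\sin\theta|/\mathcal{A})\,g(s)+O(r)$.

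The main technical obstacle is the algebraic identification of the numerator with $g(s)\sin^4\theta$. Several cross terms cancel only after applying $\cos^2\theta+\sin^2\theta=1$: for example an apparent $(a_{11}^2+1)\cos^2\theta$ term must combine with a $2(a_{11}^2+1)\cos^2\theta\sin^2\theta$ term to reproduce the $(a_{11}^2+1)\cos^4\theta+3(a_{11}^2+1)\cos^2\theta\sin^2\theta$ coefficients in \eqref{eq:g}, and analogous telescopic cancellations occur for the $a_{11}a_{02}$ cross terms. The remainder $O(r)$ absorbs both the linear $r$-correction of $\n$ coming from the blow-up expansion of the unit normal and the contributions of $B(v)$ and $A_j(u,v)$ for $j\ge 3$, neither of which affects the leading coefficient.
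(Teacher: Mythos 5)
Your plan is correct, and it reaches \eqref{eq:kgformula} by a genuinely different route than the paper. You compute $\kappa_g$ extrinsically as $\det(\tilde{\mathbf{n}},c',c'')/|c'|^3$ for the space curve $c=f\circ\gamma$ and then do an order count in $r$: the numerator is divisible by $r^3$ (its first component even by $r^4$), $|c'|^2/r^2$ is smooth and positive where $\sin\theta\neq0$, and since the $r=0$ value \eqref{unwu} of $\tilde{\mathbf{n}}$ has vanishing first component, only the $r^3$-parts of the second and third components of $c'\times c''$ contribute to the leading term. Carrying this out, the leading numerator is $\bigl((a_{11}^2+1)\cos^2\theta+a_{11}a_{02}\sin\theta\cos\theta\bigr)(\cos^2\theta+3\sin^2\theta)+a_{11}(a_{02}-a_{20})\cos\theta\sin^3\theta+a_{02}(a_{02}-a_{20})\sin^4\theta=\sin^4\theta\,g(\cos\theta/\sin\theta)$, divided by $\mathcal{A}$ and by $|\sin\theta|^3$, which is exactly the claimed first term; so the simplification is a direct factorization rather than the specific $\cos^2\theta+\sin^2\theta=1$ cancellations you describe, but that is cosmetic. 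The paper instead works with the induced metric: it expands the Christoffel symbols along the circle (they blow up like $1/(r\mathcal{A}^2)$), forms the covariant-acceleration vector $k_g$, and evaluates $\det(\tilde{\mathbf{n}},h_\theta/|h_\theta|,k_g)$; by the Gauss formula the normal part of $c''$ drops out of the determinant, so both formulas compute the same quantity with the same sign convention. Your route avoids differentiating a metric that degenerates at $r=0$ and makes transparent why $B(v)$, the $A_j$ with $j\geq3$, and the $O(r)$ correction of $\tilde{\mathbf{n}}$ do not affect the first term; the paper's route additionally records the asymptotics of the Christoffel symbols, exhibiting the intrinsic origin of the $1/\mathcal{A}$ factor. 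One small point to state explicitly: since $|c'|^3=|r|^3\bigl(|c'|^2/r^2\bigr)^{3/2}$ while your numerator carries $r^3$, the smooth extension across $r=0$ should be asserted for $r\geq0$ (a fixed sign of $r$), which is what the theorem intends.
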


\begin{proof}
 Each Christoffel simbol of $f(u,v)$ evaluated at
 $(r \cos \theta,r \sin \theta)$ is calicurated as 
\begin{align*}
 &\Gamma_{uu}^u (r \cos \theta , r \sin \theta)
 = \Gamma_{uv}^u (r \cos \theta , r \sin \theta)
 = \Gamma_{vv}^u (r \cos \theta , r \sin \theta)= O(1) , \\
 &\Gamma_{uu}^v (r \cos \theta , r \sin \theta)
 = \frac{1}{r (\mathcal{A}^2 + O(r) )} (a_{20}(a_{11} \cos \theta + a_{02} \sin
 \theta)) + O(1) , \\
 &\Gamma_{uv}^v (r \cos \theta , r \sin \theta)
 = \frac{1}{r (\mathcal{A}^2 + O(r) )} (\cos \theta  +
  a_{11} (a_{11}\cos \theta +
 a_{02} \sin \theta)) + O(1) , \\
 &\Gamma_{vv}^v (r \cos \theta , r \sin \theta)
 = \frac{1}{r (\mathcal{A}^2 + O(r) )} (a_{02} (a_{11} \cos \theta + a_{02}
 \sin \theta)) + O(1),
\end{align*}
where $O(1)$ stands for a function whose order is a constant.
 Since $\kappa_g$ is
 $$\kappa_g(r,\theta) = k_g \cdot \biggl(\tilde{\mathbf{n}} \times
 \frac{h_\theta}{|h_\theta|}\biggr) = \det \biggl( \tilde{\mathbf{n}}, 
 \frac{h_\theta}{|h_\theta|} , k_g
 \biggr),$$
 where 
  \begin{align*}
   k_g =\frac{1}{|h_\theta|^2} \sum_{\alpha \in \{ u,v \}} \Bigl\{
   \Bigl(
   \frac{\partial^2 \alpha}{\partial \theta^2} + \sum_{\zeta ,\eta \in \{ u,v
   \}} \Gamma^\alpha_{\zeta \eta} \frac{\partial \zeta}{\partial \theta}
   \frac{\partial \eta}{\partial \theta}
   \Bigr) h_{\alpha}\Bigr\},
  \end{align*}
we have the assertion.
\end{proof}

We set $\kappa_g(\theta) = \kappa_g(0,\theta)$, namely
the first term of $\kappa_g(r,\theta)$ is $\kappa_g(\theta)$.
By \eqref{eq:kgformula}, we remark that
if $\sin \theta \to 0$, then the geodesic curvature $\kappa_g(\theta) =
  \kappa_g(0,\theta)$ diverges at the order of $\theta^3$.
We have the following.
\begin{theorem}\label{thm:gsol}
The number of distinct real roots 
 of $\kappa_g(\theta) = 0$  is $2$, $1$ or $0$ on
 $ 0 < \theta < \pi$.
\end{theorem}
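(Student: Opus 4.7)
The plan is to convert the equation $\kappa_g(\theta) = 0$ into a quartic polynomial equation in $s = \cot \theta$, and then bound the number of real solutions via a convexity argument on $g$.

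First, for $\theta \in (0, \pi)$ one has $\sin \theta > 0$ and $\mathcal{A}(\theta) > 0$, so by \eqref{eq:kgformula}, $\kappa_g(\theta) = 0$ holds if and only if $g(\cot \theta) = 0$. Since $\theta \mapsto s = \cot \theta$ is a smooth bijection from $(0, \pi)$ onto $\R$, counting zeros of $\kappa_g$ on $(0, \pi)$ reduces to counting distinct real roots of the quartic $g(s)$.

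By Lemma \ref{a02}, I may assume $a_{02} = 1$: the source transformation $u \mapsto a_{02} u$ preserves the upper half-plane, inducing a homeomorphism $\theta \mapsto \arctan(\tan\theta/a_{02})$ on $(0,\pi)$, while the target scaling only multiplies $\kappa_g$ by a positive constant; thus the zero count on $(0,\pi)$ is unchanged. Under this assumption I compute
\[
g''(s) = 12(a_{11}^2 + 1)s^2 + 6 a_{11} s + 6(a_{11}^2 + 1).
\]
Its discriminant as a quadratic in $s$ equals $36\bigl[a_{11}^2 - 8(a_{11}^2 + 1)^2\bigr]$; setting $t = a_{11}^2 \geq 0$, the expression $8(t+1)^2 - t = 8t^2 + 15t + 8$ is positive for all $t \geq 0$, so the discriminant is strictly negative. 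Combined with the positive leading coefficient of $g''$, this gives $g''(s) > 0$ for every $s \in \R$, so $g$ is strictly convex. Since the leading coefficient $a_{11}^2 + 1 > 0$ forces $g(s) \to +\infty$ as $|s| \to \infty$, strict convexity forces $g$ to have a unique global minimum and at most two distinct real zeros; the three cases $0$, $1$, $2$ correspond to a positive, zero, or negative minimum value respectively, which exhausts the possibilities stated in the theorem.

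The main obstacle I expect is verifying the discriminant inequality $8(a_{11}^2 + 1)^2 > a_{11}^2$, which is exactly what forces strict convexity of $g$ and hence the bound of two roots; once this elementary inequality is in hand, the reduction from a trigonometric count on $(0,\pi)$ to a root count of a strictly convex quartic makes the conclusion immediate.
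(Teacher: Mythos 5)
Your proposal is correct and follows essentially the paper's own route: you reduce $\kappa_g(\theta)=0$ to the quartic $g(s)=0$ with $s=\cot\theta$ ranging over all of $\R$, you invoke the normalization $a_{02}=1$ via Lemma \ref{a02} exactly as the paper does, and your strict-convexity condition $g''(s)>0$ is the same inequality as the criterion $P=8ac-3b^2>0$ used in Lemma \ref{thm:sol} (the discriminant of $g''$ equals $-12P$), so both arguments exclude $3$ or $4$ distinct real roots in the same way. The difference is only presentational: you phrase the exclusion through convexity of $g$ rather than through the general theory of quartic equations.
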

To show this theorem, since the geodesic curvature vanishes if and only if
$g(s)=0$ where $s=\cos\theta/\sin\theta$, and
the range of $\cos\theta/\sin\theta$ $(0<\theta<\pi)$
is the entire real number,
we show the number of the solution of
the quadratic equiation $g(s) = 0$ with respect to $s\in\R$
is $2$, $1$ or $0$.
\begin{lemma}\label{lem:gsol}
\begin{enumerate*}
\item\label{itm:g1} The equation $g(s) = 0$ 
never has quadruple solutions nor 
     two different $($real or complex$)$ double solutions.
 \item\label{itm:g2} The equation $g(s) = 0$ does not have triple solutions.
\end{enumerate*}
\end{lemma}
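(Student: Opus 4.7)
The plan is to negate each forbidden multiplicity pattern, write $g(s)$ in the corresponding factored form, match coefficients against \eqref{eq:g}, and extract a sign contradiction from the hypothesis $a_{02}>0$. The structural observation that makes the approach work is the rigid identity $C/A = 3$ between the $s^2$-coefficient and leading coefficient of $g$, which holds for every choice of parameters and severely constrains any admissible factorization.

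For the quadruple-root part of~(1), suppose $g(s)=(a_{11}^2+1)(s-\alpha)^4$. Matching the $s^2$-coefficient immediately yields $6\alpha^2 = 3$, hence $\alpha^2 = 1/2$; the $s^3$-coefficient then pins down $\alpha = -a_{11}a_{02}/(4(a_{11}^2+1))$, and the $s$-coefficient forces $a_{20} = 7a_{02}/2$. Substituting these into the constant-term equation gives
\[
-\tfrac{5}{2}\,a_{02}^{2} \;=\; \tfrac{1}{4}(a_{11}^2+1),
\]
whose left side is negative (since $a_{02}>0$) while its right side is positive, a contradiction. For the two-distinct-double-roots part, write $g(s)=(a_{11}^2+1)(s^2-ps+q)^2$ with $p^2 \ne 4q$. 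The $s^2$-equation forces $p^2+2q=3$, the $s^3$-equation gives $p = -a_{11}a_{02}/(2(a_{11}^2+1))$, and the $s$-equation expresses $a_{20}$ in terms of $a_{02}$ and $p$. Substituting into the constant-term equation reduces to
\[
-\tfrac{1}{2}\,a_{02}^{2}(3+p^2) \;=\; \tfrac{1}{4}(a_{11}^2+1)(3-p^2)^2,
\]
again a negative quantity equated to a non-negative one.

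For~(2), suppose $g$ admits a triple root $\alpha$. Since $g$ has real coefficients, a non-real $\alpha$ would force its complex conjugate $\bar\alpha$ to be a triple root as well, giving at least six roots counted with multiplicity and contradicting $\deg g = 4$; hence $\alpha$, and therefore the remaining (simple) root $\beta$, are real. Writing $g(s)=(a_{11}^2+1)(s-\alpha)^3(s-\beta)$ and matching the $s^2$-coefficient yields $\alpha(\alpha+\beta) = 1$, so $\alpha \ne 0$ and $\beta = 1/\alpha - \alpha$. Substituting this into the $s^3$-, $s$-, and constant-coefficient equations produces three relations in $\alpha$, $a_{11}$, $a_{02}$, $a_{20}$; eliminating $a_{20}$ and $a_{02}$ reduces everything to a single polynomial equation in $\alpha$ whose sign is again incompatible with $a_{02}>0$.

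The main obstacle is purely algebraic bookkeeping: in each of the three cases one must eliminate the auxiliary multiplicity-pattern unknowns ($\alpha$, or $p$ and $q$) among four simultaneous coefficient equations to expose a single relation whose sign manifestly conflicts with $a_{02}>0$. The identity $C/A = 3$ is precisely what makes these eliminations terminate in a sign-determinate one-variable equation, and the positivity of $a_{02}$ is the ingredient that closes every case.
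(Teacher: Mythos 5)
Your factor-and-match strategy is fine in outline, and the quadruple-root case is correct and complete as you state it (note that $\alpha^2=1/2$ together with $\alpha=-a_{11}a_{02}/(4(a_{11}^2+1))$ forces $a_{11}\neq 0$, which legitimizes the division at the $s$-coefficient step); this is in fact more elementary than the paper's discriminant computations. The genuine gap is in the two-double-roots case: the step ``the $s$-equation expresses $a_{20}$ in terms of $a_{02}$ and $p$'' silently divides by $a_{11}$, since that equation is $a_{11}a_{02}q=a_{11}(4a_{02}-a_{20})$. When $a_{11}=0$ (equivalently $p=0$) it reads $0=0$, no relation on $a_{20}$ follows, and your displayed contradiction is never reached. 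This is not a patchable technicality for the complex subcase, because the assertion itself fails there: taking $a_{11}=0$ and $a_{02}(a_{02}-a_{20})=9/4$, e.g.\ $(a_{20},a_{11},a_{02})=(-5/4,0,1)$, gives $g(s)=s^4+3s^2+\tfrac94=(s^2+\tfrac32)^2$, which has two distinct complex-conjugate double roots. (The real-double-double subcase at $p=0$ is easily excluded separately, since $p^2+2q=3$ gives $q=3/2>0$ while $q=\alpha\beta=-\alpha^2\le 0$.) For what it is worth, the paper's own proof stumbles at exactly the same spot: the argument via $\operatorname{Rem}_{a_{20}}(\Delta,R)\neq 0$ presupposes that $R$ is a nonzero polynomial in $a_{20}$, i.e.\ $a_{11}\neq 0$, whereas at $a_{11}=0$ one has $R\equiv 0$ and $\Delta$ does vanish at $a_{20}=-5/4$; so the failure you would have hit is a real defect of the statement, not just of your elimination.

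Part (2) as submitted is a plan rather than a proof: the observations that a triple root must be real and that the $s^2$-coefficient gives $\alpha(\alpha+\beta)=1$ are correct, but the entire remaining content is deferred to ``eliminating $a_{20}$ and $a_{02}$ reduces everything to a single polynomial equation in $\alpha$ whose sign is incompatible with $a_{02}>0$'', which is precisely what needs to be verified. Carrying it out, the $s^3$-equation first forces $a_{11}\neq 0$ (otherwise $2\alpha^2+1=0$), and eliminating $a_{02}$ and then $a_{20}$ yields $(1+a_{11}^2)/a_{11}^2=\alpha^4(\alpha^2-1)/\bigl((2\alpha^2+1)(2\alpha^4+3\alpha^2+3)\bigr)$; since the left-hand side exceeds $1$, this reduces to $3(\alpha^2+1)^3<0$, a contradiction. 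So the route does close, but via reality of $\alpha$ and $1+a_{11}^2>a_{11}^2$ rather than the positivity of $a_{02}$ you announce, and none of this is in your text. In summary: quadruple case correct, triple case salvageable but unproved as written, and the two-double-roots case has a concrete hole at $a_{11}=0$ where the claimed statement is actually false (the paper proves this part by quartic invariants $\Delta$, $R$, $\Delta_0$, $P$ and remainder computations, and overlooks the same degenerate case).
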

\begin{proof}
\ref{itm:g1}
By the general theory of the quadratic equiation,
$3 {a_{11}}^2 {a_{02}}^2 -8 ({a_{11}}^2 + 1)^2 \geq 0$
is a necessary condition that $g(s) = 0$ 
has quadruple solutions or 
two different real double solutions.
Thus we may assume $a_{11} \not= 0$.
By the general theory of the quadratic equiation again,
\begin{equation}\label{eq:gproof1}
{a_{11}}^2 (9 {a_{11}}^4 + 16 {a_{11}}^2 + 8) {a_{02}}^4 +
24({a_{11}}^2 + 1)^2 (3 {a_{11}}^2 +4) {a_{02}}^2 + 144 ({a_{11}}^2 +
1)^4 = 0
\end{equation}
is a necessary condition that $g(s) = 0$ 
has quadruple solutions or 
two different real double solutions.
We regard the equation \eqref{eq:gproof1} as
an equation with respect to $a_{02}$.
Since ${a_{02}}^2 > 0$, 
any of coefficients in the left-hand side of
\eqref{eq:gproof1} with respect to $a_{02}$ are positive,
there does not exist real number $a_{02}$
 such that \eqref{eq:gproof1} holds.
 Next we prove $g(s) = 0$ does not have two dfferent complex double solution.
 Since the similarity transformation of
 $\R^3$ does not change whether the geodesic curvature is $0$ or not,
 we assume $a_{02} = 1$ from the Lemma 
 \ref{a02}.
 By the general theory of quadratic equation,
the quadratic equiation $q(x)=0$, where
$
q(x)= a x^4 + b x^3 + c x^2 + d x + e
$
has a complex double solution, then
$\Delta = R = 0$ holds.
Here $\Delta$ is the discriminant of $q$, and
$R=b^3+8a^2d-4abc$.
For the case of the equiation $g(s)=0$, we have
 \begin{align}\label{eq:delta}
 \Delta =& -108 { a_{11} }^{10} (2 + a_{20})^2  
 - 27 { a_{11} }^8 (36 + 116 a_{20} + 13 {a_{20}}^2 + 2 {a_{20}}^3 + {a_{20}}^4)\nonumber \\
 & - 54 { a_{11} }^6 (-54 + 183 a_{20} - 7 {a_{20}}^2 + 11 {a_{20}}^3 +
 {a_{20}}^4) \nonumber \\
 & - 9 { a_{11} }^4 (-865 + 1340 a_{20} - 118 {a_{20}}^2 + 144
 {a_{20}}^3 + 3 {a_{20}}^4) \nonumber \\
 &- 36 { a_{11} }^2 (-125 + 125 a_{20} - {a_{20}}^2 + 28 {a_{20}}^3)
 - 16 (-1 + a_{20}) (5 + 4 a_{20})^2,
\end{align}
 and $R = {a_{11}}^3 - 12 a_{11} (1 + {a_{11}}^2)^2
 - 8 a_{11} (1 + {a_{11}}^2)^2 (-4 + a_{20})$.
 Let $\operatorname{Rem}_{a_{20}}(\Delta,R)$
 be the remainder of the polynomial $\Delta$
 with respect to $a_{20}$ devided by $R$, then 
$\operatorname{Rem}_{a_{20}}(\Delta,R)$ is
  $$- \frac{ (384 + 1232 {a_{11}}^2 + 1275 {a_{11}}^4 + 432 {a_{11}}^6)
 (240 + 848 {a_{11}}^2 + 
1113 {a_{11}}^4 + 648 {a_{11}}^6 + 144 {a_{11}}^8)^2}{4096({a_{11}}^2 + 1)}.$$
 Since $\operatorname{Rem}_{a_{20}}(\Delta,R) < 0$ for any $a_{11}$, 
there does not exist a pair of real numbers
 $(a_{20},a_{11})$ such that $\Delta = R = 0$ holds.  

\ref{itm:g2}
We assume $a_{02} = 1$.
By the general theory of the quadratic equiation again,
the quadratic equiation $q(x)=0$ above 
has a triple solution, then $\Delta = \Delta_0 = 0$,
where $\Delta_0=c^2-3bd+12ae$.
For the case of the equiation $g(s)=0$, we have
$\Delta_0 = 21 - 12 a_{20} +
 ( 18 - 9 a_{20} ) { a_{11} }^2 + 9 { a_{11} }^4 .$
Since
\begin{align*}
\label{eq:divd0}
 \operatorname{Rem}_{a_{20}}(\Delta,\Delta_0) &= - \frac{27}{ ( 4 + 3 {a_{11}}^2 )^4 }
 (128 + 537 { a_{11} }^2 + 939 { a_{11} }^4 + 818 { a_{11} }^6 + 354 {
 a_{11} }^8 \\
 &\hspace{70mm}
+ 72 { a_{11} }^{10} +   9 { a_{11} }^{12})^2,
\end{align*}
we see $\operatorname{Rem}_{a_{20}}(\Delta,\Delta_0)<0$
for any $a_{11}$.
Thus there does not exist 
a pair of real numbers $(a_{20},a_{11})$ such that
 $\Delta = \Delta_0 = 0$.
\end{proof}
Furthremore we have the following.
\begin{lemma}\label{thm:sol}
 There does not exist a triple of real numbers
 $(a_{20} , a_{11} , a_{02})$
 such that the number of distinct real roots 
 of $g(s) = 0$ is $4$ or $3$.
\end{lemma}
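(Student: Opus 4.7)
The plan is to reduce to the case $a_{02}=1$ via Lemma \ref{a02}, and then invoke the same necessary condition for real-rootedness of a quartic that appeared in the proof of Lemma \ref{lem:gsol}\ref{itm:g1}.

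First, I would argue that the number of distinct real roots of $g(s)=0$ is invariant under the transformation of Lemma \ref{a02}. The real roots of $g$ are in bijection with the directions $\theta \in (0,\pi)$ at which the leading term $\kappa_g(0,\theta)$ of the geodesic curvature vanishes, via $s = \cos\theta/\sin\theta$. Under the source diffeomorphism $u \mapsto a_{02}u$, the polar angle $\theta$ is reparametrized to $\tilde\theta$ with $\cot\tilde\theta = \cot\theta/a_{02}$, which is a linear bijection of $\R$ onto itself (since $a_{02}\neq 0$). Hence the real-root count is preserved, and we may freely assume $a_{02}=1$.

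Suppose, for contradiction, that $g(s)=0$ has $3$ or $4$ distinct real roots. Since $g$ has real coefficients and any non-real complex roots come in conjugate pairs, all four roots of $g$ (counted with multiplicity) must then be real---the multiplicity pattern being either $(1,1,1,1)$ (in the $4$-distinct case) or $(2,1,1)$ (in the $3$-distinct case). By the same ``general theory'' argument used in the proof of Lemma \ref{lem:gsol}\ref{itm:g1}, a necessary condition for all roots of $g = as^4+bs^3+cs^2+ds+e$ to be real is $3b^2 \geq 8ac$, which for our coefficients reads
\[
a_{11}^2 a_{02}^2 - 8(a_{11}^2+1)^2 \geq 0.
\]
Substituting $a_{02}=1$ yields $a_{11}^2 \geq 8(a_{11}^2+1)^2$, i.e., $8a_{11}^4 + 15a_{11}^2 + 8 \leq 0$, which is impossible since the left-hand side is strictly positive for every real $a_{11}$. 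This contradicts the assumption and proves the lemma.

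The main obstacle is the first paragraph: one must verify that the transformation of Lemma \ref{a02}, which simultaneously scales the target and reparametrizes the source, really does preserve the number of real roots of $g$. Once this invariance is established, the rest is an elementary application of the classical necessary condition for a quartic to have all its roots real, exactly in the spirit of the argument in Lemma \ref{lem:gsol}.
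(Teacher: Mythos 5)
Your second paragraph is exactly the paper's argument: the paper also notes that $3$ or $4$ distinct real roots would force all roots of the quartic to be real, and then invokes the necessary condition via the quantity $P=8ac-3b^{2}$, computing (with $a_{02}=1$) $P=24a_{11}^{4}+45a_{11}^{2}+24>0$, which is three times your $8a_{11}^{4}+15a_{11}^{2}+8$. So the algebraic core of your proposal coincides with the paper's proof.

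The genuine problem is your first paragraph. The claim that ``the real-root count is preserved'' under the normalization of Lemma \ref{a02}, justified only by the bijection $\cot\tilde\theta=\cot\theta/a_{02}$ of directions, is false as stated. Lemma \ref{a02} sends $(a_{20},a_{11},a_{02})$ to $(a_{02}a_{20},a_{11},1)$, and $g$ is not equivariant under this together with $s\mapsto s/a_{02}$: for $(a_{20},a_{11},a_{02})=(1,0,2)$ one has $g(s)=s^{4}+3s^{2}+2$, with no real roots, while the normalized parameters $(2,0,1)$ give $g(s)=s^{4}+3s^{2}-1$, which has two real roots. The reason your directional argument does not work is that the first term of $\kappa_g$ is computed along images of coordinate circles, and the source diffeomorphism $u\mapsto a_{02}u$ does not preserve circles (it sends them to ellipses), so the limiting geodesic curvature along the two families need not have corresponding zero sets; the computation above confirms they do not. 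The paper's own reduction is not based on your algebraic invariance claim but on the geometric assertion that a similarity of $\R^{3}$ does not affect the vanishing of the geodesic curvature, combined with Lemma \ref{a02} (as in the proof of Lemma \ref{lem:gsol}); note that some such additional input is genuinely needed, since for general $a_{02}$ the quantity $3b^{2}-8ac=3\bigl(a_{11}^{2}a_{02}^{2}-8(a_{11}^{2}+1)^{2}\bigr)$ can be positive (e.g.\ $a_{11}=1$, $a_{02}=10$), so your final inequality by itself does not dispose of the unnormalized case. As written, then, your proof has a gap at the reduction step: either justify the normalization along the paper's geometric lines, or give an argument that works for arbitrary $a_{02}$.
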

\begin{proof}
We assume $a_{02} = 1$ by the same reason as 
the proof of the Lemma \ref{lem:gsol} \ref{itm:g2}.
By the general theory of the quadratic equiation again,
the quadratic equiation $q(x)=0$ above,
the number of the distinct real roots 
is $4$ or $3$, then
$P = 8ac - 3b^2 < 0$. 
For the case of the equiation $g(s)=0$, we have
$$P = 24 { a_{11} }^4 + 45 { a_{11} }^2 + 24 > 0.$$ 
Thus, we have the assertion. 
\end{proof}
By Theorem \ref{thm:gsol},
the equation $g(s) = 0$ has 2, 1 or 0 solutions.
It is considered to be a special Whitney umbrella
when the equation $g(s) = 0$ has a double solution.
Here, we will discuss the geometric meaning of such a Whitney umbrella.
We set 
$$f(u,v) = \left(u , uv , 
\frac{1}{2} a_{20} u^2 + \frac{1}{2} a_{02}v^2\right)
$$
and the image of $f$ has a symmetry with respect to the $xy$-plane.
In this case, since $$g(s)= s^4 + 3s^2 + a_{02} (a_{02} - a_{20}),$$
$g(s) = 0$ has a real double root if and only if  $a_{20} = a_{02}$.
Then the double root is
$\cot \theta = 0$, namely $\theta = \pi/2$.
We set $(a_{20}, a_{11}, a_{02}) = (1,0,1)$ in the above map $f$.
Then this $f$ satisfies that the equation
$g(s) = 0$ has a real double root.
See Figure \ref{fig:a200211}.
\begin{figure}[htbp]
 \begin{center}
\includegraphics[width=50mm]{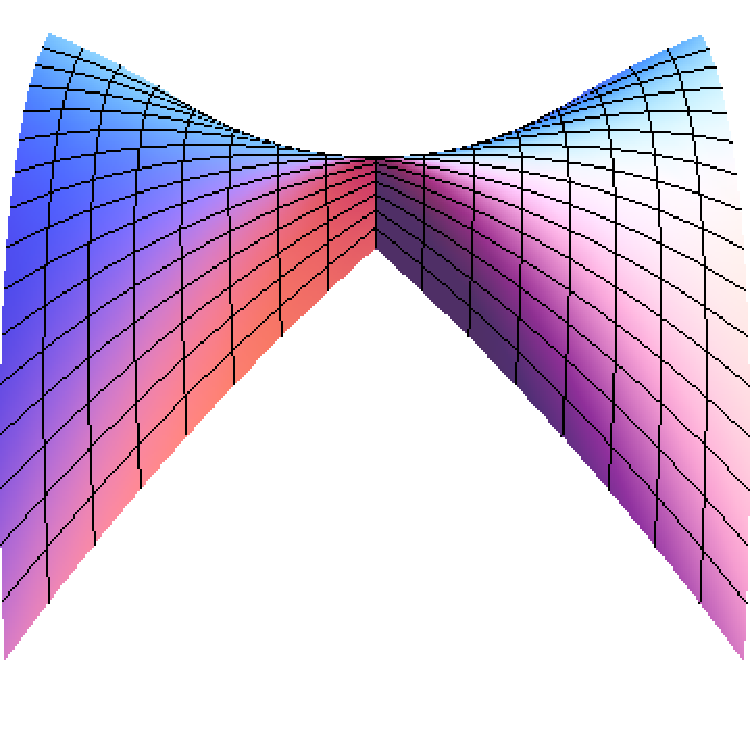}
 \end{center}
 \caption{The surface $f$ with $(a_{20}, a_{11}, a_{02}) = (1,0,1)$}
\label{fig:a200211}
\end{figure}
To see the geometric meaning of this condition, 
we introduce the Dupin
indicatrix of regular surface.

\begin{definition}
Let $f$ be a $f(u,v) = (u,v,au^2 + bv^2 + O(3) )$.
Concidering a curve on the intersection set $I$ between the  image of $f$
 and the plane $\{ z=z_0\}$ that is shifted the tangent plane at the
 origin by $z_0 >0$ in the direction of normal, then the second order
 term of defining equation of $I$ is a quadratic curve $au^2 + bv^2 = z_0$.
 We call this curve {\it the Dupin indicatrix at the origin}.
\end{definition}
We can define Dupin indicatrix of the Whitney umbrella similarly.
Let $f$ be a Whitney umbrella and it is the right-hand side of
$\eqref{eq:bwnormal}$.
Then $\partial v$ is a generator of the kernel 
$\operatorname{Ker} df_0$.
The vectors $f_u$ and $f_u \times f_{vv}$ are linearly independent,
and the plane 
$\langle f_u(0), f_u(0)
 \times f_{vv}(0) \rangle$ 
does not depend on the choice of the coordinate system
$(u,v)$.
We take $w \in \langle f_u(0), f_u(0)
\times f_{vv}(0) \rangle^{\bot} $. 

\begin{definition}
Considering a curve on the source space
which is 
the intersection set $I$ between the image of $f$ and
the plane $\{z = z_0\}$
that is shifted 
the plane $\langle f_u(0), f_u(0)\times f_{vv}(0) \rangle$ 
by $z_0(>0)$ in the direction of $w$,
then the second order terms of defining equation of $I$ is a quadratic
$$C : \frac{1}{2} a_{20} u^2 + a_{11} uv + \frac{1}{2} a_{02} v^2 = z_0.$$
We call $C$ the {\it 
Dupin indicatrix of the Whitney Umbrella on the source space}.
\end{definition}
Since we assumed $a_{11}=0$, the quadratic curve
$$C : \frac{1}{2} a_{20} u^2 + \frac{1}{2} a_{02} v^2 = z_0 $$
is an ellipse if $a_{20} > 0$,
a pair of parallel lines if $a_{20} = 0$, and
a hyperbola if $a_{20} < 0$. 
Especially, $a_{20} = a_{02}$ if and only if $C$ is a circle.
Moreover, we have the following well-known fact.
\begin{fact}
 The quadratic curve
$$C =\left\{(u,v)\in \R^2 \,\left|\,\frac{1}{2} a_{20} u^2 + a_{11} uv + \frac{1}{2} a_{02}
 v^2 = z_0  \right.\right\} 
$$
is a circle if and only if $a_{11} = 0$ and $a_{20} = a_{02}$.
\end{fact}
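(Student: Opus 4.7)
The plan is to recognise the defining expression as the quadratic form $\mathbf{x}^{\top} M \mathbf{x}$ associated to the symmetric matrix
$$
M=\begin{pmatrix} a_{20}/2 & a_{11}/2 \\ a_{11}/2 & a_{02}/2 \end{pmatrix},
$$
so that $C=\{\mathbf{x}\in\R^2 \mid \mathbf{x}^{\top} M\mathbf{x}=z_0\}$. Because $C$ contains no linear terms in $u,v$, any circle of this form must be centred at the origin, and the question reduces to when $\mathbf{x}^{\top} M\mathbf{x}=z_0$ coincides with $u^2+v^2=R^2$.

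For the ``if'' direction I would simply substitute $a_{11}=0$ and $a_{20}=a_{02}$, obtaining $\tfrac{a_{20}}{2}(u^2+v^2)=z_0$, which is the equation of a circle of radius $\sqrt{2z_0/a_{20}}$ (consistent because $z_0>0$ and $a_{02}>0$ in the normal form give $a_{20}>0$ here).

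For the ``only if'' direction, suppose $C$ is a circle, hence centred at the origin and of the form $u^2+v^2=R^2$. Two defining quadratics of the same conic must be proportional, so $M=\lambda I$ for some $\lambda>0$, from which $a_{11}=0$ and $a_{20}=a_{02}=2\lambda$ follow by reading off entries. Equivalently, by the spectral theorem the level set $\mathbf{x}^{\top} M\mathbf{x}=z_0$ is a circle exactly when the two eigenvalues of $M$ are equal and positive, which forces $M$ to be a positive scalar multiple of the identity.

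There is really no substantial obstacle here; this is a standard diagonalisation argument for $2\times 2$ symmetric matrices. The only point that deserves a line of care is ruling out the degenerate situations (an ellipse with unequal axes, a hyperbola, or a pair of parallel lines), all of which fail the equality-of-eigenvalues criterion and hence cannot be circles, so they are dispatched immediately.
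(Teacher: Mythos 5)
Your argument is correct. Note that the paper itself offers no proof of this statement at all --- it is recorded as a well-known fact --- so there is nothing to compare against; your write-up simply supplies the standard omitted argument. The two halves are sound: for ``if'', substituting $a_{11}=0$, $a_{20}=a_{02}$ gives $\tfrac{a_{20}}{2}(u^2+v^2)=z_0$, and since the normal form has $a_{02}>0$ and $z_0>0$ this is a genuine (nonempty) circle; for ``only if'', the key point is that a circle is an irreducible conic with infinitely many real points, so any two real quadratic polynomials defining it are proportional, which forces $M=\lambda I$ with $\lambda>0$ (equivalently, equal positive eigenvalues), hence $a_{11}=0$ and $a_{20}=a_{02}$. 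The one sentence that deserves tightening is the claim that absence of linear terms alone centres the circle at the origin: the clean justification is either the proportionality of defining quadratics you invoke immediately afterwards, or the observation that the zero set of a homogeneous quadratic minus a constant is invariant under $(u,v)\mapsto(-u,-v)$, and a circle with that symmetry must be centred at the origin. With that remark, the proof is complete and is the natural eigenvalue/proportionality argument one would expect for this fact.
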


 \subsection{Normal curvature}
Let $f=f_0$ be the right-hand side of \eqref{eq:bwnormal},
and $\gamma$ be the curve defined in \eqref{def:gamma}.
Let $\kappa_n$ be the normal curvature of $\gamma$.
 We have the following.
 \begin{theorem}
 If  $\sin \theta \neq 0$, then the normal curvature $\kappa_n$ is
  continuous and
  $$\kappa_n(r,\theta) = \frac{\cos \theta}{\A\sin^2\theta} \bigl((3 a_{02} +
  a_{20})\sin^2 \theta +
  (a_{02} - a_{20}) \cos^2 \theta \bigr) + O(r).$$ 
 \end{theorem}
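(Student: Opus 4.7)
The plan is to write the normal curvature as
$$\kappa_n(r,\theta) = \frac{h_{\theta\theta}(\theta)\cdot \tilde{\mathbf{n}}(r,\theta)}{|h_\theta(\theta)|^2},$$
with $h=f_0\circ\gamma$, and to compute the leading $r$-behavior of numerator and denominator separately using the Taylor expansions of the partial derivatives of $f_0$ at $(u,v)=(r\cos\theta,r\sin\theta)$ read off from \eqref{eq:bwnormal}, together with the smoothly extended unit normal $\tilde{\mathbf{n}}$ of \eqref{unwu}.

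First I would expand $f_u$ and $f_v$ at $(r\cos\theta,r\sin\theta)$ to first order in $r$. Since $h_\theta=r(-\sin\theta\,f_u+\cos\theta\,f_v)$, this yields $|h_\theta|^2=r^2\sin^2\theta+O(r^3)$, which is bounded away from zero for $(r,\theta)$ near $(0,\theta_0)$ whenever $\sin\theta_0\neq 0$. In particular the denominator factors as $r^2\sin^2\theta$ times a smooth nonvanishing function. Next I would compute
$$h_{\theta\theta}=-r\cos\theta\,f_u-r\sin\theta\,f_v+r^2\bigl(\sin^2\theta\,f_{uu}-2\sin\theta\cos\theta\,f_{uv}+\cos^2\theta\,f_{vv}\bigr),$$
using the leading values $f_{uu}(0)=(0,0,a_{20})$, $f_{uv}(0)=(0,1,a_{11})$, $f_{vv}(0)=(0,0,a_{02})$ from \eqref{eq:bwnormal}; both the $f_u, f_v$ piece and the second-partial piece contribute at order $r^2$, so $h_{\theta\theta}$ has order exactly $r^2$.

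Next I would pair $h_{\theta\theta}$ with $\tilde{\mathbf{n}}(0,\theta)=\mathcal{A}^{-1}(0,-a_{11}\cos\theta-a_{02}\sin\theta,\cos\theta)$. Because the first component of $\tilde{\mathbf{n}}(0,\theta)$ vanishes, the $-r\cos\theta$ first component of $h_{\theta\theta}$ drops out, and a direct expansion in $\sin\theta,\cos\theta$ (aided by $\sin 2\theta=2\sin\theta\cos\theta$ and $\cos 2\theta=\cos^2\theta-\sin^2\theta$) should give
$$h_{\theta\theta}\cdot\tilde{\mathbf{n}}=\frac{r^2\cos\theta}{\mathcal{A}}\bigl((3a_{02}+a_{20})\sin^2\theta+(a_{02}-a_{20})\cos^2\theta\bigr)+O(r^3).$$
Dividing by $|h_\theta|^2=r^2\sin^2\theta+O(r^3)$ yields the stated formula. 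Continuity of $\kappa_n$ at $r=0$ (for $\sin\theta\neq 0$) follows because both numerator and denominator are smooth on $\mathcal{M}$ with the same order of vanishing along the exceptional set $\{r=0\}$.

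The main obstacle is the trigonometric simplification in the pairing: the numerator first appears as a linear combination of $\sin\theta\cos^2\theta$, $\sin^2\theta\cos\theta$, $\cos\theta\cos 2\theta$ and $\cos\theta\sin 2\theta$ with coefficients built from $a_{20},a_{11},a_{02}$, and one must verify that the $a_{11}$-contributions coming from $f_{uv}$ and from the second component of $\tilde{\mathbf{n}}$ cancel exactly, so that the leading coefficient depends on $a_{11}$ only through $\mathcal{A}(\theta)$. Beyond this cancellation, the computation is a routine, if lengthy, Taylor expansion.
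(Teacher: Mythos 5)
The paper states this theorem without proof, so the only question is whether your argument is sound, and it has a genuine gap at the decisive step. Your framework ($\kappa_n=(h_{\theta\theta}\cdot\tilde{\mathbf{n}})/|h_\theta|^2$, expanded in $r$) is the right one, but you pair $h_{\theta\theta}$ with the limiting normal $\tilde{\mathbf{n}}(0,\theta)$ and drop the first component $-r\cos\theta$ of $h_{\theta\theta}$ on the grounds that the first component of $\tilde{\mathbf{n}}(0,\theta)$ vanishes. For $r>0$ the normal curvature at $f_0(r\cos\theta,r\sin\theta)$ must be computed with $\tilde{\mathbf{n}}(r,\theta)$, whose first component is not negligible at the relevant order: from $f_u\times f_v=\bigl(a_{02}v^2-a_{20}u^2+O(r^3),\,-(a_{11}u+a_{02}v)+O(r^2),\,u+O(r^2)\bigr)$ one finds that the first component of $\tilde{\mathbf{n}}(r,\theta)$ is $r(a_{02}\sin^2\theta-a_{20}\cos^2\theta)/\A+O(r^2)$. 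Since the first component of $h_{\theta\theta}$ is exactly $-r\cos\theta$ (order $r$, one order lower than its other two components), this cross term contributes at order $r^2$, the same order as everything you keep, so it cannot be discarded. The clean way to organize the computation is to use that $f_u\cdot\tilde{\mathbf{n}}=f_v\cdot\tilde{\mathbf{n}}=0$ holds exactly at the point, so that $h_{\theta\theta}\cdot\tilde{\mathbf{n}}=r^2\bigl(L\sin^2\theta-2M\sin\theta\cos\theta+N\cos^2\theta\bigr)$ with $L=f_{uu}\cdot\tilde{\mathbf{n}}=a_{20}\cos\theta/\A+O(r)$, $M=f_{uv}\cdot\tilde{\mathbf{n}}=-a_{02}\sin\theta/\A+O(r)$, $N=f_{vv}\cdot\tilde{\mathbf{n}}=a_{02}\cos\theta/\A+O(r)$.

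Carrying this out gives the first term $\cos\theta\bigl(a_{02}\cos^2\theta+(2a_{02}+a_{20})\sin^2\theta\bigr)/(\A\sin^2\theta)$, which differs from the displayed formula by exactly the term you dropped, $\cos\theta(a_{02}\sin^2\theta-a_{20}\cos^2\theta)/(\A\sin^2\theta)$; your derivation reproduces the printed coefficient only because it in effect computes $h_{\theta\theta}\cdot\tilde{\mathbf{n}}(0,\theta)$ rather than $h_{\theta\theta}\cdot\tilde{\mathbf{n}}(r,\theta)$. Note that the corrected value is the one consistent with the rest of the paper: since $\hh=l\kappa_2$ and $\kappa_2=\e'\cdot\n$ with $\e=h_\theta/l$, one has $\hh=h_{\theta\theta}\cdot\tilde{\mathbf{n}}$ and $\kappa_n=\hh/l^2$, and the later expansions $\hh=F_{\hh}r^2/\A+O(r^3)$ with $F_{\hh}=\cos\theta\bigl(a_{02}\cos^2\theta+(2a_{02}+a_{20})\sin^2\theta\bigr)$ and $l=r\sin\theta+O(r^2)$ force the first term of $\kappa_n$ to be $F_{\hh}/(\A\sin^2\theta)$; the same expression governs $\beta$ in \eqref{eq:beta} and $\hat\kappa_1$ in the last section. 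So as written your proof does not establish the statement: either keep the cross term and address the resulting discrepancy with the printed formula (and with the subsequent remark that $\kappa_n(\theta)=0$ iff $\cos2\theta=2a_{02}/(a_{02}+a_{20})$), or state and justify explicitly a convention in which the pairing is taken with $\tilde{\mathbf{n}}(0,\theta)$, which for $r>0$ is not the unit normal of the surface at the point in question.
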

 We set $\kappa_n(\theta) = \kappa(0,\theta)$, namely the first term of
 $\kappa_n(r,\theta)$ is $\kappa_n(\theta)$.
We assume $a_{02} + a_{20}\neq 0$. Then
  $\kappa_n(\theta) = 0$
if and only if
$\cos2\theta = 2a_{02}/(a_{02}+a_{20})$ holds
$\theta \in (0,\pi)\setminus \{ \pi/2 \}$.
Moreover, if $a_{02} + a_{20} = 0$, there does not exist
$\theta \in (0,\pi)\setminus \{ \pi/2 \}$ such that
$\kappa_n(\theta) =0$ holds.

\subsection{Ruled surface defined by normal vector}
In this section, we take a curve which goes around a Whitney umbrella,
and consider
a ruled surface
whose director curve is the normal vector along the curve.
Let $f$ be a map defind by $f_0$ in \eqref{eq:bwnormal}.
Let 
$$\hat{\gamma} (\theta) = f \circ \gamma (\theta) = \left(r
\sin \theta , r^2 \sin \theta \cos \theta + \cdots , r^2 (a_{20} \cos^2
\theta + a_{11} \sin \theta \cos \theta + a_{02} \sin^2 \theta) +
\cdots\right)$$
be a curve.
If $r \neq 0$, then $f$ at $(r\cos\theta,r\sin\theta)$ is regular, 
there exists a unit normal vector
$\tilde{\n}$ along $\hat\gamma$
which can be regarded as function of $\theta$.
We have the ruled surface 
$g(\theta,\beta) = \hat{\gamma}(\theta) + \beta \tilde{\mathbf{n}}(\theta)$.
Calculating Gaussian curvature $K$ of the ruled surface $g$, we have
$$
K=
-\dfrac{\sin\theta^2}
{\beta^2}+O(r).
$$
Thus we see that if $\sin \theta \neq 0$, then 
the first term of $K \neq 0$ holds. 
The striction curve $\sigma$ of $g$ is given by
$\sigma = \hat{\gamma} + \beta(r,\theta) \tilde{\mathbf{n}},$
where
\begin{equation}\label{eq:beta}
\beta(r,\theta) = \frac{2 \cos \theta \left(a_{02} \cos^2 \theta
+ (2 a_{02} + a_{20}) \sin^2 \theta\right) \A^3}{{a_{02}}^2} r^2 +
O(r^3).
\end{equation}
Therefore, the first term of $\beta$ is 
$2 \cos \theta \left(a_{02} \cos^2 \theta
+ (2 a_{02} + a_{20}) \sin^2 \theta\right)$.
If $a_{02}+a_{20} \neq 0$, then 
the first term of $\beta$ vanishes if and only if
$\cos2\theta = (3a_{02}+a_{20})/(a_{02} + a_{20})$
or $\theta=\pi/2$.

\section{Normal developable surface on a curve which goes around a
 Whitney umbrella} 
In this section, 
we consider the normal developable surface defined in 
Secion \ref{sec:nds}.
Let $f=f_0$ be the right-hand side of \eqref{eq:bwnormal}
Let us consider a circle
$\hat{\gamma}(\theta)=f(r\cos\theta,r\sin\theta)$
for small $r>0$ as in \eqref{def:gamma}.
Since $r>0$, there exist a unit vector $\e(\theta)$ such
that $\hat{\gamma}_\theta = l(\theta) \e(\theta)$,
where $l$ is a function.
Let $\n$ be a unit vector along $\hat\gamma$ satisfying
$\e\cdot\n=0$.
We set  $\bb = - \e \times \n$.
As we saw in Section \ref{sec:nds},
we obtain the normal developable surface with respect to
$\{\e,\n,\bb\}$,
and functions $\kappa_1,\kappa_2,\kappa_3$ by the equation \eqref{eq:kappa123}.
Furthermore, we obtain functions 
$\delta$, $k$ which describe the
normal developable surface
is a cylinder or a cone respectively.
These are depend on the choices of $\n$.
In this section, 
we study asymptotic behavior of
these functions when $r\to0$, by looking at the
first terms of them with respect to $r$,
in the cases of $\n$ is the normal vector $\tilde\n$ of Whitney umbrella,
and $\n=-\e\times\tilde\n$. 

If $\n = \tilde{\n}$, the functions $\kappa_1,\kappa_2,\kappa_3$
can be calculated as follows:
  \begin{align*}
   \kappa_1 = \frac{\hat\kappa_1}{l^2},\quad
   \kappa_2 = \frac{\hh}{l},\quad
   \kappa_3 = \frac{\hhh}{l},
  \end{align*}
where $\hat\kappa_i$ are $C^{\infty}$-functions
defined by
   \begin{align*}
   \hat\kappa_1 &= \frac{F_{\hat\kappa_1}}{ \A} r^3 + O(r^4),\\ 
   \hh &=  \frac{F_{\hh}}{\A} r^2 + O(r^3), \\
   \hhh &= - \frac{F_{\hhh}}
    { (1 + {a_{11}}^2) \cos^2 \theta
     + a_{02} \sin \theta (2 a_{11} \cos \theta + a_{02} \sin \theta)} r
      + O(r^2), \\
   F_{\hat\kappa_1}(\theta) &=  (1 + {a_{11}}^2) \cos^4 \theta
    +  a_{02} a_{11} \cos^3\theta \sin\theta  
    + 3 (1 + {a_{11}}^2) \cos^2 \theta \sin^2 \theta \\
    &\hspace{1.5cm} +  a_{11} (4 a_{02} - a_{20} ) \cos \theta \sin^3 \theta
     +  a_{02} (a_{02} - a_{20}) \sin^4 \theta,\\
   F_{\hh}(\theta)&= \cos\theta\left( a_{02} \cos^2 \theta + (2 a_{02} +
    a_{20})
    \sin^2\theta\right),\\
   F_{\hhh}(\theta)&= a_{02} \sin \theta.
   \end{align*}
Thus $F_{\hat\kappa_i}(\theta)$ $(i=1,2,3)$ 
are the first terms of $\hat\kappa_i$.
By using them and $l = r \sin \theta + O(r^2)$, we have
  \begin{align*}
   \delta &= \frac{4 a_{02} \sin^2 \theta }{\A^5}F_{\delta}(\theta)
   r^3 + O(r^4),\\
F_{\delta}(\theta)&= (1 + {a_{11}}^2) (2 a_{02} + a_{20}) \cos^4 \theta
   +2 {a_{02}} a_{11} (2 a_{02} + a_{20}) \cos^3 \theta \sin \theta \\
   &+   ({a_{02}}^2 (2 a_{02} + a_{20}) - (1 + {a_{11}}^2)
   (a_{02} + 2 a_{20})) \cos^2
   \theta \sin^2 \theta \\
   &-2 {a_{02}} a_{11} (a_{02} + 2 a_{20}) \cos \theta \sin^3
   \theta \\
   &- {a_{02}}^2 (a_{02} + 2 a_{20}) \sin^4 \theta
  \end{align*}
and
  \begin{align*}
   k = & -\frac{12 a_{02}(a_{02} - a_{20})\sin^4 \theta}{ \left(1 + {a_{02}}^2
   + {a_{11}}^2 + (1 - {a_{02}}^2 + {a_{11}}^2) \cos 2\theta + 
   2 a_{02} a_{11} \sin 2\theta\right)^2} F_k(\theta)r^6
+ O(r^7),\\
F_k(\theta) =& (4 a_{02} + a_{20}) \cos^4\theta + (a_{02} - a_{20}) 
\cos^2\theta\sin^2\theta + (a_{02} + 2 a_{20})\sin^4\theta.
  \end{align*}
Thus  $F_{\delta}(\theta)$ and $F_k(\theta)$ are
the first terms of $\delta$ and $k$ respectively.
We have the following.
\begin{theorem}
{\rm (1)}
If $2a_{20} + a_{02} < 0$, then the number of 
$\theta\in(0,\pi)\setminus\{\pi/2\}$ satisfying
that $F_{\hh}(\theta)=0$ is generically $2$ or $0$.
{\rm (2)}
If $2 a_{02} + a_{20} \neq 0$, then the number
of $\theta\in(0,\pi)$ satisfying that 
$F_{\delta}(\theta) =0$ is generically 
   $4$,
    $2$, or $0$.  
If $2 a_{02} + a_{20} = 0$, then
$F_{\delta} \neq 0$ for any $\theta\in (0,\pi)$.
\end{theorem}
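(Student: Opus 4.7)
The plan is to reduce each trigonometric equation $F_{\hh}(\theta)=0$ and $F_{\delta}(\theta)=0$ to a polynomial equation in the single variable $s=\cos\theta/\sin\theta=\cot\theta$, using that $\theta\mapsto s$ is a strictly decreasing bijection from $(0,\pi)$ onto $\R$. Since $\sin\theta\neq0$ on $(0,\pi)$, dividing $F_{\hh}$ by $\sin^2\theta$ and $F_{\delta}$ by $\sin^4\theta$ is legitimate, and each real root of the resulting polynomial corresponds bijectively to a zero of the original function in $(0,\pi)$.

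For (1), the factor $\cos\theta$ in $F_{\hh}(\theta)=\cos\theta\bigl(a_{02}\cos^2\theta+(2a_{02}+a_{20})\sin^2\theta\bigr)$ vanishes only at $\theta=\pi/2$, which is excluded, so on $(0,\pi)\setminus\{\pi/2\}$ the equation $F_{\hh}=0$ reduces to $a_{02}s^2+(2a_{02}+a_{20})=0$. Since $a_{02}>0$, this has the two distinct real solutions $s=\pm\sqrt{-(2a_{02}+a_{20})/a_{02}}$ precisely when $2a_{02}+a_{20}<0$, yielding two angles in $(0,\pi)\setminus\{\pi/2\}$; otherwise there are none (the degenerate root $s=0$ in the boundary case $2a_{02}+a_{20}=0$ would correspond to the excluded $\theta=\pi/2$).

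For (2) in the case $2a_{02}+a_{20}\neq0$, the same substitution converts $F_{\delta}(\theta)=0$ into a quartic $Q(s)$ whose leading coefficient $(1+a_{11}^2)(2a_{02}+a_{20})$ is nonzero. Because the non-real roots of a real polynomial occur in conjugate pairs, a generic (simple-rooted) real quartic has $0$, $2$, or $4$ real roots; these transfer under the bijection above to the asserted count of $\theta$-solutions in $(0,\pi)$, with the exceptional counts $1$ and $3$ ruled out on a Zariski-open set of parameters where the discriminant of $Q$ is nonzero.

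Finally, in the boundary case $2a_{02}+a_{20}=0$, substituting $a_{20}=-2a_{02}$ annihilates the top two coefficients of $Q$ and, using $a_{02}+2a_{20}=-3a_{02}$, simplifies $F_{\delta}$ to
\begin{equation*}
F_{\delta}(\theta)=3a_{02}\sin^2\theta\,\bigl((1+a_{11}^2)\cos^2\theta+2a_{02}a_{11}\cos\theta\sin\theta+a_{02}^2\sin^2\theta\bigr).
\end{equation*}
The bracketed quadratic form in $(\cos\theta,\sin\theta)$ has symmetric matrix with determinant $a_{02}^2(1+a_{11}^2)-a_{02}^2 a_{11}^2=a_{02}^2>0$ and positive $(1,1)$-entry $1+a_{11}^2$, hence is positive definite, so $F_{\delta}$ is strictly positive on $(0,\pi)$. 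The only substantive obstacle is executing this coefficient-collapse calculation and verifying the positive-definiteness (equivalently, the negativity of the discriminant $-4a_{02}^2$); all remaining steps are direct algebraic bookkeeping.
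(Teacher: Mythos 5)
Your proposal is correct and follows essentially the same route as the paper: reduce $F_{\hh}=0$ and $F_{\delta}=0$ to polynomial equations in $s=\cot\theta$ (a quadratic and a quartic with leading coefficient $(1+a_{11}^2)(2a_{02}+a_{20})$), and in the boundary case $2a_{02}+a_{20}=0$ observe that $F_{\delta}$ collapses to a manifestly positive expression, which the paper writes as $\cos^2\theta+(a_{11}\cos\theta+a_{02}\sin\theta)^2=\mathcal{A}^2$. Your computation is in fact slightly more careful, since it keeps the positive factor $3a_{02}\sin^2\theta$ that the paper's displayed identity omits, and it makes the genericity count for the quartic explicit.
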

\begin{proof}
(1) is obtained by the formula of
$F_{\hh}(\theta)$.
We show (2).
The equation $F_{\delta}(\theta)=0$ is a quartic equation 
with respect to $\cos\theta/\sin\theta$,
we have the first assertion of (2).
If $2 a_{02} + a_{20} = 0$, then
$$
F_{\delta}(\theta)
=
 (1 + {a_{11}}^2) \cos^2 \theta + 2 a_{02} a_{11} \cos \theta \sin
   \theta  + {a_{02}}^2 \sin^2 \theta
= \cos^2 \theta + (a_{11} \cos \theta + a_{02} \sin \theta)^2,
$$
we have the assertion.
\end{proof}
Let us set $(a_{20}, a_{11}, a_{02}) = (-2,0,1)$
in the above $f$. Then $F_{\hh}(\theta)$ does not vanish
for any $\theta$. See Figure \ref{fig:2a0220}.
\begin{figure}[htbp]
 \begin{center}
\includegraphics[width=50mm]{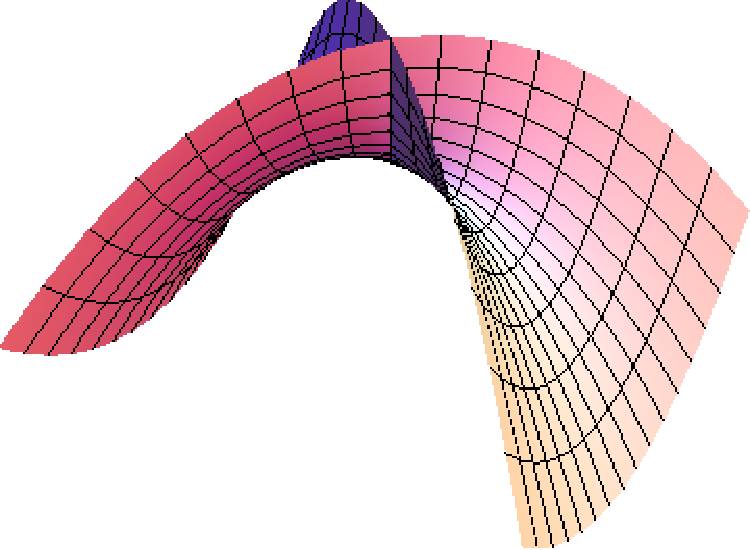}
 \end{center}
 \caption{The surface $f$ with $(a_{20}, a_{11}, a_{02}) = (-2,0,1)$}
\label{fig:2a0220}
\end{figure}
Comparing \eqref{eq:beta} and $F_{\hh}$,
we have the following theorem.
\begin{theorem}
The condition
  $F_{\hh} = 0$ holds if and only if the first term of $\beta(\theta)$
  vanishes, where
  $\beta(\theta)$ is given by \eqref{eq:beta}.
 \end{theorem}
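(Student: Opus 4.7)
The plan is to prove the theorem by a direct comparison of the two explicitly computed expressions already displayed in the text, with no further calculation required.

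First I would recall, from the formulas for the normal developable surface in the case $\n = \tilde{\n}$, that
$$F_{\hh}(\theta) = \cos\theta\bigl(a_{02}\cos^2\theta + (2a_{02}+a_{20})\sin^2\theta\bigr),$$
which by construction is the first term of $\hh$ with respect to $r$. Then I would read off from \eqref{eq:beta} that the first term of $\beta(r,\theta)$ is
$$\frac{2\cos\theta\bigl(a_{02}\cos^2\theta + (2a_{02}+a_{20})\sin^2\theta\bigr)\,\A^3}{a_{02}^2} = \frac{2\,\A(\theta)^3}{a_{02}^2}\, F_{\hh}(\theta).$$

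The only remaining step is to check that the proportionality factor $2\A^3/a_{02}^2$ is nowhere zero in $\theta$. But $\A(\theta) \neq 0$ for every $\theta$ by the definition recalled right after \eqref{unwu}, and $a_{02} > 0$ by Theorem \ref{thm:bwnormal} (so in particular $a_{02} \neq 0$, and one may even normalize $a_{02} = 1$ via Lemma \ref{a02} if desired). Consequently the first term of $\beta$ vanishes at a given $\theta$ if and only if $F_{\hh}(\theta) = 0$, which is the claim.

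There is essentially no obstacle here, since the hard work has already been done in the computations that produced $F_{\hh}$ and the expansion \eqref{eq:beta}; the theorem amounts to observing that the two first-term expressions differ only by the everywhere-nonzero prefactor $2\A^3/a_{02}^2$. The only small point to be careful about is to flag explicitly that both $\A$ and $a_{02}$ are nonzero so that the equivalence of the zero sets is genuine and not merely a proportionality of formal expressions.
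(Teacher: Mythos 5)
Your proposal is correct and matches the paper's argument: the paper proves the theorem precisely by comparing \eqref{eq:beta} with $F_{\hh}$, i.e.\ by observing that the first term of $\beta$ equals $F_{\hh}$ multiplied by the factor $2\A^3/{a_{02}}^2$, which never vanishes since $\A\neq0$ and $a_{02}>0$. Your explicit flagging of the nonvanishing of the prefactor is exactly the (implicit) content of the paper's one-line proof, so nothing further is needed.
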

  Moreover, For the function $k$, 
  we have the following.
   \begin{theorem}\label{thm:4a0220}
We assume  $a_{02} \neq a_{20}$ and
    $4 a_{02} + a_{20} \neq 0$.
Setting $s=\cos\theta/\sin\theta$, 
then real roots of $F_k(\theta) = 0$ satisfy
   \begin{equation*}
    s =
     \begin{dcases*}
      \pm \sqrt{\frac{a_{20} - a_{02} + \sqrt{T} }{4 a_{02} + a_{20}} }
      & {\rm if} $- 4 a_{02} < a_{20} < - 1 / 2 a_{02}$, \\
      \pm \sqrt{\frac{a_{20} - a_{02} \pm \sqrt{T} }{4 a_{02} + a_{20}} }
      &{\rm if} $ - 5 a_{02} \leq a_{20} < - 4 a_{02} $, \\
      0 \,\,\text{\rm (double root)} 
      &{\rm if} $a_{20} = - 1/2 a_{02}$, \\
     \text{\rm No real root.} & {\rm otherwise,}
     \end{dcases*}
   \end{equation*}
    where $T = - (3 a_{02} + 7 a_{20})(5 a_{02} + a_{20})$.
    Moreover, if $ 4 a_{02} + a_{20} = 0$, then real roots of $F_k(\theta)
    = 0$ are $\theta = \pm \cot^{-1}( \sqrt{7/5})$. 
   \end{theorem}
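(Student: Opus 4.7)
The plan is to reduce the vanishing of $F_k$ to a quadratic equation and then count real solutions by a sign analysis. Since $F_k(\theta)$ is homogeneous of degree $4$ in $(\cos\theta,\sin\theta)$ with only even exponents, dividing by $\sin^{4}\theta$ (nonzero on the range under consideration) and setting $X=s^{2}$ with $s=\cos\theta/\sin\theta$ converts $F_k(\theta)=0$ into the quadratic
\[
(4a_{02}+a_{20})\,X^{2}+(a_{02}-a_{20})\,X+(a_{02}+2a_{20})=0.
\]
The hypothesis $4a_{02}+a_{20}\neq 0$ allows the quadratic formula to be applied, and a direct expansion identifies the discriminant (up to a positive constant factor) with $T=-(3a_{02}+7a_{20})(5a_{02}+a_{20})$. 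The two candidate values $X_{\pm}$ are therefore real precisely when $T\geq 0$, which, using $a_{02}>0$ from the Bruce--West normal form, amounts to $-5a_{02}\leq a_{20}\leq -3a_{02}/7$.

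Once $X_{\pm}$ are real, each strictly positive $X_{\pm}$ contributes two real values $\pm\sqrt{X_{\pm}}$ of $s$, the value $X_{\pm}=0$ contributes $s=0$ as a double root, and negative values contribute nothing. To read off the signs of $X_{\pm}$ cleanly I will use Vieta's formulas
\[
X_{+}X_{-}=\frac{a_{02}+2a_{20}}{4a_{02}+a_{20}},\qquad X_{+}+X_{-}=\frac{a_{20}-a_{02}}{4a_{02}+a_{20}},
\]
together with $a_{02}>0$ and the hypothesis $a_{02}\neq a_{20}$ (the latter guarantees the sum numerator is strictly nonzero throughout the relevant range).

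The case analysis then splits according to the signs of $4a_{02}+a_{20}$ and $a_{02}+2a_{20}$. When $4a_{02}+a_{20}>0$ and $a_{02}+2a_{20}<0$ (equivalently $-4a_{02}<a_{20}<-a_{02}/2$) the product $X_{+}X_{-}$ is negative, so exactly one root is positive, yielding two real $s$; here $T>0$ is automatic since $3a_{02}+7a_{20}<0$ and $5a_{02}+a_{20}>0$ throughout this range. At the boundary $a_{20}=-a_{02}/2$ the product vanishes and Vieta's sum forces the other root to be negative, so $s=0$ is a double root. When $4a_{02}+a_{20}<0$ (equivalently $a_{20}<-4a_{02}$), one has $a_{02}+2a_{20}<0$ automatically, hence both product and sum are positive, so $X_{+}$ and $X_{-}$ are both positive, giving four real values of $s$; combined with $T\geq 0$ this happens exactly for $-5a_{02}\leq a_{20}<-4a_{02}$. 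In every remaining range, either $T<0$ or both $X_{\pm}\leq 0$, so there is no real root.

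The degenerate case $4a_{02}+a_{20}=0$ is handled by direct factorisation: substituting $a_{20}=-4a_{02}$ yields $F_k(\theta)=a_{02}\sin^{2}\theta\,(5\cos^{2}\theta-7\sin^{2}\theta)$, whose only nontrivial zeros give $\cot^{2}\theta=7/5$, hence $\theta=\pm\cot^{-1}(\sqrt{7/5})$. No deep obstacle is anticipated; the proof is essentially bookkeeping, and the only delicate point is keeping the four sign factors $3a_{02}+7a_{20}$, $5a_{02}+a_{20}$, $4a_{02}+a_{20}$, $a_{02}+2a_{20}$ simultaneously straight so that the three subcases of the Vieta analysis line up with the three strict inequalities appearing in the theorem.
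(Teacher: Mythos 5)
Your proposal is correct and follows essentially the same route as the paper: substitute $s=\cos\theta/\sin\theta$, reduce $F_k=0$ to the quadratic $(4a_{02}+a_{20})s^4+(a_{02}-a_{20})s^2+(a_{02}+2a_{20})=0$ in $s^2$, identify the discriminant with $T$, and sort out the cases by the signs of $4a_{02}+a_{20}$, $a_{02}+2a_{20}$ and the condition $T\ge 0$, with the $4a_{02}+a_{20}=0$ case treated by direct substitution. Your Vieta bookkeeping is just a repackaging of the paper's sign analysis of $(4a_{02}+a_{20})(a_{02}+2a_{20})$ and of the explicit root expressions, so there is no substantive difference.
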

    \begin{proof}
     Let $p$ be a polynomial
     $$p = p(s) = (4 a_{02} + a_{20}) s^4 + (a_{02} - a_{20}) s^2
 + (a_{02} + 2 a_{20})$$
which is obtained by setting $s=\cos\theta/\sin\theta$ in $F_k(\theta)$.
    If $4 a_{02} + a_{20} = 0$, then $p(s) = 5 a_{02}  s^2
     - 7 a_{02} $. Thus, we have the last assertion.
          If $a_{02} \neq a_{20}$ and
    $4 a_{02} + a_{20} \neq 0$,
   the necessary condition that $p(s) = 0$ has real root is $T \geq
   0$. It is equivarent $- 5 a_{02} \leq a_{20} \leq -3/7 a_{02}$.
   Then,
   \begin{equation}\label{eq:s^2}
     s^2 = \frac{a_{20} - a_{02} \pm \sqrt{T} }{4 a_{02} + a_{20}} .
   \end{equation}
     Thus, 
the condition that 
the right-hand side of \eqref{eq:s^2} is not negative
is also a necessary condition for $p(s) = 0$ has a real root.
    If $4 a_{02} + a_{20} > 0$, 
then 
     $$\frac{a_{20} - a_{02} - \sqrt{T} }{4 a_{02} + a_{20}} < 0$$ holds,
     and
    $(4 a_{02} + a_{20})(a_{02} + 2 a_{20}) < 0 $
    is equivarent to
$$\frac{a_{20} - a_{02} + \sqrt{T} }{4 a_{02} +
a_{20}} > 0.$$
    This condition implies $-4 a_{02} < a_{20} <-1/2 a_{02}
    $. 
Since $a_{20} > -4 a_{02}$, if $-4 a_{02} < a_{20} < -1/2 a_{02}
    $, then $p(s) = 0$ has real roots. 
Moreover, if $4 a_{02} + a_{20} < 0$, 
then
    $$\frac{a_{20} - a_{02} - \sqrt{T} }{4 a_{02} + a_{20}} > 0$$
holds,
 and
    $(4 a_{02} + a_{20})(a_{02} + 2 a_{20}) > 0 $
    is equivarent to
    $$\frac{a_{20} - a_{02} + \sqrt{T} }{4 a_{02} + a_{20}} > 0.$$
The condition
$(4 a_{02} + a_{20})(a_{02} + 2 a_{20}) > 0 $
is equivalent to $ a_{20}  < -4 a_{02} , \, -1/2 a_{02} < a_{20}$.
    Since $a_{20} < -4 a_{02}$ and $- 5 a_{02} \leq a_{20} \leq -3/7
    a_{02}$, 
    if $- 5 a_{02} \leq a_{20} < -4 a_{02} $, then $p(s) = 0$ has real
    roots. Finally, if $a_{02} + 2a_{20} = 0$, then $p(s) = s^2(7/2 a_{02}s^2
     +3/2 a_{02})$. Thus, $p(s) = 0$ has real root $s=0$.
    From the avobe, we have the assertion.    
    \end{proof}
We set $(a_{20}, a_{11}, a_{02}) = (-4,0,1)$ in the above $f$.
As in Theorem \ref{thm:4a0220},
the real roots of $F_k(\theta) = 0$ 
can be calculated as $\theta = \pm \cot^{-1}( \sqrt{7/5})$
concretely. See Figure \ref{fig:4a02a20}.
\begin{figure}[htbp]
 \begin{center}
\includegraphics[width=50mm]{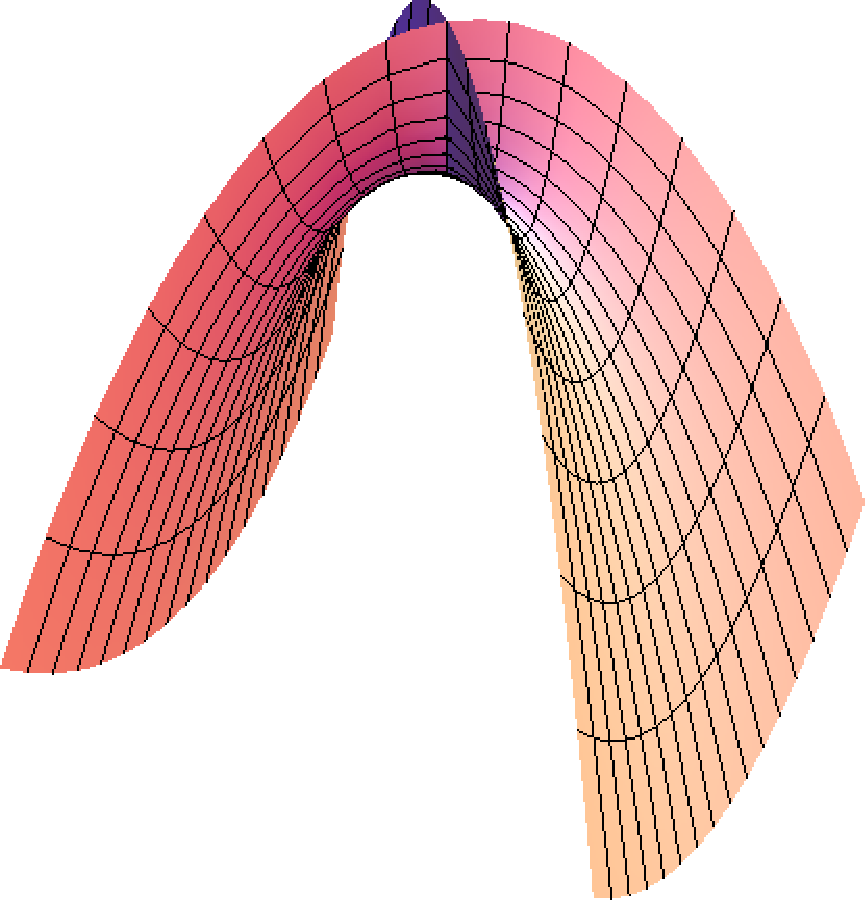}
 \end{center}
 \caption{The surface $f$ with $(a_{20}, a_{11}, a_{02}) = (-4,0,1)$}
\label{fig:4a02a20}
\end{figure}

In the case of $\n = - \e \times \tilde{\n}$, 
it holds that 
  \begin{align*}
   \kappa_1 &= \frac{{\hat{\kappa}_1}}{l} ,\\
   \kappa_2 &= \frac{1}{ l^2 \A}\bigl( (1 + {a_{11}}^2) \cos^4 \theta
   +  a_{02} a_{11} \cos^3\theta \sin\theta
   + 3 (1 + {a_{11}}^2) \cos^2 \theta \sin^2 \theta \\
   &\hspace{1.5cm} +  a_{11} (4 a_{02} - a_{20} ) \cos \theta \sin^3 \theta
   +  a_{02} (a_{02} - a_{20}) \sin^4 \theta
   \bigr) r^3 + O(r^4)\\
   &= \frac{\sin^4 \theta}{ l^2 \A} g(\cos\theta/\sin\theta) r^3 + O(r^4), \\
   \kappa_3 &= \frac{\hhh}{l},
  \end{align*}
  where $g$ is the polynomial 
given by
\eqref{eq:g} and $\hat{\kappa}_1$, $\hhh$ are $C^{\infty}$-function that satisfy
  \begin{align*}
   \hat{\kappa}_1 &= - \frac{ \cos\theta\left( a_{02} \cos^2 \theta
   + (2 a_{02} + a_{20})\sin^2
   \theta\right)}{\A} r^2 + O(r^3), \\
   \hhh &= - \frac{a_{02} \sin \theta}
   { (1 + {a_{11}}^2) \cos^2 \theta
   + a_{02} \sin \theta (2 a_{11} \cos \theta + a_{02} \sin \theta)} r
   + O(r^2).
  \end{align*}
By the formula of $\kappa_2$,
we have the following.
  \begin{theorem}
   The first term of $\kappa_2$ is vanishes if and only if
   $g(\cos\theta/\sin\theta) = 0$ holds, where
$g$ is given by \eqref{eq:g}.
  \end{theorem}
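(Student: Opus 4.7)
The plan is to read the claim directly off the displayed expansion of $\kappa_2$ that precedes the theorem, together with Definition~\ref{def:firstterm}. That expansion gives
$$
\kappa_2
= \frac{\sin^4\theta}{l^2\mathcal{A}}\, g(\cos\theta/\sin\theta)\,r^3 + O(r^4),
$$
and since $l = r\sin\theta + O(r^2)$, one has $l^2 = r^2\sin^2\theta + O(r^3)$, so the coefficient in front of $g(\cos\theta/\sin\theta)$ expands as $(\sin^2\theta/\mathcal{A}(\theta))\,r + O(r^2)$. In the language of Definition~\ref{def:firstterm}, this identifies $x_f(\theta) = g(\cos\theta/\sin\theta)$ and $x_0(\theta) = \sin^2\theta/\mathcal{A}(\theta)$, so once we show $x_0(\theta)\neq 0$ for admissible $\theta$, the first term of $\kappa_2$ vanishes exactly when $g(\cos\theta/\sin\theta) = 0$.

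The only substantive preparatory work is therefore to verify the expansion itself. First I would use the Frenet--Serre type formula \eqref{eq:kappa123} to write $\kappa_2 = \mathbf{e}'\cdot \mathbf{n}$, where $\mathbf{e} = \hat\gamma_\theta/l$ and, in the current case, $\mathbf{n} = -\mathbf{e}\times\tilde{\mathbf{n}}$. I would then substitute the explicit series for $\hat\gamma(\theta)$ from \eqref{def:gamma}, for $l(\theta)$, and for $\tilde{\mathbf{n}}(\theta)$ from \eqref{unwu}, each expanded in powers of $r$, and simplify using the identity $\mathbf{e}'\cdot(\mathbf{e}\times\tilde{\mathbf{n}}) = \det(\mathbf{e},\mathbf{e}',\tilde{\mathbf{n}})$. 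Collecting the leading order in $r$, the numerator of the resulting rational function in $(r,\theta)$ factors as the claimed polynomial
$$
(1+a_{11}^2)\cos^4\theta + a_{02}a_{11}\cos^3\theta\sin\theta
+ 3(1+a_{11}^2)\cos^2\theta\sin^2\theta
+ a_{11}(4a_{02}-a_{20})\cos\theta\sin^3\theta
+ a_{02}(a_{02}-a_{20})\sin^4\theta,
$$
which equals $\sin^4\theta\cdot g(\cos\theta/\sin\theta)$; the denominator contributes $l^2\mathcal{A}$, yielding the displayed formula.

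Finally, I would observe that $\mathcal{A}(\theta)\neq 0$ by the definition immediately following \eqref{unwu}, and under the standing hypothesis $\sin\theta\neq 0$ the factor $\sin^2\theta/\mathcal{A}(\theta)$ is nonzero for every admissible $\theta$. Hence $x_0$ never vanishes and Definition~\ref{def:firstterm} applies: the first term of $\kappa_2$ is $g(\cos\theta/\sin\theta)$, and it vanishes if and only if $g(\cos\theta/\sin\theta)=0$.

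The main obstacle is purely computational: the bookkeeping of the $r$-series for $\mathbf{e}$, $\mathbf{e}'$ and $\tilde{\mathbf{n}}$ must be carried out to sufficient order so that the cancellations producing the $r^3$ leading order are correctly captured, and the resulting quartic in $\cos\theta/\sin\theta$ must be recognized as the polynomial $g$ from \eqref{eq:g}. Once this verification is done, the theorem itself is immediate.
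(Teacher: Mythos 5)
Your proposal is correct and follows essentially the same route as the paper: the theorem is read off the displayed leading-order expansion $\kappa_2=\frac{\sin^4\theta}{l^2\mathcal{A}}\,g(\cos\theta/\sin\theta)\,r^3+O(r^4)$, using that $\sin^2\theta/\mathcal{A}$ does not vanish for the admissible $\theta$ (with $\sin\theta\neq0$), the verification of the expansion itself being a direct computation of $\kappa_2=\mathbf{e}'\cdot\mathbf{n}$ exactly as the paper does implicitly. (Your triple-product identity has a harmless sign slip, which cancels against the sign in $\mathbf{n}=-\mathbf{e}\times\tilde{\mathbf{n}}$ and does not affect the vanishing statement.)
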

  Moreover, for $\delta$ and $k$, we have
  \begin{align}
   \delta &= \frac{a_{02}}{ \sin^2 \theta \A^5}\label{eq:delta2}
   \Bigl(
   3 (1 + {a_{11}}^2)^2 \cos^7 \theta
   + 7 a_{02} a_{11} (1 + {a_{11}}^2) \cos^6 \theta \sin \theta \\
   &\hspace{2cm}+ \bigl(8 (1 + {a_{11}}^2)^2 + {a_{02}}^2 (2 + 5 {a_{11}}^2)\bigr) \cos^5
   \theta \sin^2 \theta \nonumber\\
   &\hspace{2cm}+  a_{02} a_{11} ({a_{02}}^2 + 16 (1 + {a_{11}}^2))
   \cos^4 \theta \sin^3 \theta \nonumber\\
   &\hspace{2cm}+ \bigl(9 (1 + {a_{11}}^2)^2 + {a_{02}}^2 (3 + 6 {a_{11}}^2)
   + a_{02} (a_{20} + 2 {a_{11}}^2 a_{20})\bigr)
   \cos^3 \theta \sin^4 \theta \nonumber\\
   &\hspace{2cm}
   + a_{11} \bigl(-4 {a_{02}}^2 (a_{02} - a_{20}) + (1 + {a_{11}}^2)( 23 a_{02}
   - 
   2  a_{20} )\bigr) \cos^2 \theta \sin^5 \theta \nonumber\\
   &\hspace{2cm}- a_{02} \bigl(2 {a_{02}}^3 - a_{02} (7 + 19 {a_{11}}^2)
   + a_{20} (1 - 2 {a_{02}}^2  + 
   4 {a_{11}}^2 )\bigr) \cos \theta \sin^6 \theta \nonumber\\
   &\hspace{2cm}+ {a_{02}}^2 a_{11} (5 a_{02} - 2 a_{20}) \sin^7 \theta
   \Bigr) r^2 + O(r^3), \nonumber\\
       k &= - \frac{64 a_{02}}{ \sin^4 \theta \A^4}\label{eq:k2}
   \Bigl(\sum_{i+j=14} k_{i,j} \cos^{i}\theta\sin^j\theta 
   \Bigr)r^4 + O(r^5),
  \end{align}
   where
   \begin{align*}
    k_{14,0} &= -3 (1 + {a_{11}}^2)^4,\\
    k_{13,1} &= -8 a_{02} a_{11} (1 + {a_{11}}^2)^3,\\
    k_{12,2} &= - (1 + {a_{11}}^2)^2
                 (9 (1 + {a_{11}}^2)^2 + {a_{02}}^2 {a_{11}}^2),\\
    k_{11,3} &=   a_{11} (1 + {a_{11}}^2) \left( {a_{02}}^3 (7 + 16
                    {a_{11}}^2) 
               + (1 + {a_{11}}^2)^2 (31 a_{02} - 15 a_{20})\right), \\
    k_{10,4} &=   {a_{02}}^4 {a_{11}}^2 (16 + 19 {a_{11}}^2)
             + (1 + {a_{11}}^2)^2 (-69 (1 + {a_{11}}^2)^2\\
             &        \, \, \,\hspace{2em}+ {a_{02}}^2 (46 + 265 {a_{11}}^2) - 
                7 a_{02} (3 + 14 {a_{11}}^2) a_{20}), \\
    k_{9,5} &= - a_{11} \bigl(-{a_{02}}^5 (3 + 8 {a_{11}}^2) - {a_{02}}^3
                (288 + 869 {a_{11}}^2 + 581 {a_{11}}^4) \\
               &        \, \, \,\hspace{2em}+ 
               {a_{02}}^2 (147 + 400 {a_{11}}^2 + 253 {a_{11}}^4) a_{20} + 
                18 (1 + {a_{11}}^2)^3 (11 a_{02} + a_{20})\bigr),
      \end{align*}
     \begin{align*}
    k_{8,6} &= {a_{02}}^3 ({a_{02}}^3 {a_{11}}^2 + 
           a_{02} (64 + 586 {a_{11}}^2 + 609 {a_{11}}^4)
           - (54 + 353 {a_{11}}^2 + 
           332 {a_{11}}^4) a_{20}) \\
           &\hspace{3em}
                 + (1 + {a_{11}}^2)^2 ({a_{02}}^2 (28 - 25 {a_{11}}^2) - 
             147 (1 + {a_{11}}^2)^2 - a_{02} (45 + 152 {a_{11}}^2)
             a_{20}), \\
    k_{7,7} &= - a_{11} \bigl(-{a_{02}}^5 (188 + 337 {a_{11}}^2) - 
           4 {a_{02}}^3 (91 + 216 {a_{11}}^2 + 125 {a_{11}}^4)
             - 21 (1 + {a_{11}}^2)^3 a_{20} \\
           &\hspace{3em}+ 
          {a_{02}}^4 (149 + 233 {a_{11}}^2) a_{20} + 
             {a_{02}}^2 (311 + 737 {a_{11}}^2 + 426 {a_{11}}^4) a_{20} \\
             &\hspace{5em}+ 
          a_{02} (1 + {a_{11}}^2) (621 (1 + {a_{11}}^2)^2 + 2 {a_{11}}^2
          {a_{20}}^2)\bigr), \\
    k_{6,8} &= {a_{02}}^2 \Bigl(
                 {a_{02}}^2 \bigl(121 + 844 {a_{11}}^2 + 697 {a_{11}}^4 + 
                  {a_{02}}^2 (24 + 95 {a_{11}}^2)\bigr) \\
               &\hspace{3em} - 
                a_{02} \bigl(135 + 687 {a_{11}}^2 + 528 {a_{11}}^4
               + {a_{02}}^2 (24 + 82 {a_{11}}^2)\bigr) a20 - 
                      2 {a_{11}}^2 (3 + 8 {a_{11}}^2) {a_{20}}^2 \\
                  &\hspace{5em}+ 
                  3 (1 + {a_{11}}^2)^2 (-36 (1 + {a_{11}}^2)^2
                       - 35 {a_{02}}^2 (1 + 9 {a_{11}}^2) + 
                     2 a_{02} (-4 + {a_{11}}^2) a_{20})\Bigr), \\
     k_{5,9} &= - a_{11} \Bigl(
                  -11 {a_{02}}^7 - 2 {a_{02}}^5 (161 + 177 {a_{11}}^2)
                        + 11 {a_{02}}^6 a_{20} \\
              &\hspace{1em}+ 
               {a_{02}}^4 (289 + 262 {a_{11}}^2) a_{20} 
                + 
                    {a_{02}}^3 \bigl(70 + 485 {a_{11}}^4 + 6 {a_{20}}^2 
                    + {a_{11}}^2 (555 + 44 {a_{20}}^2)\bigr) \\
                        &\hspace{3em}+
                  (1 +  {a_{11}}^2)
                      (24 (1 + {a_{11}}^2)^2 (23 a_{02} - 2 a_{20}) \\
        &\, \, \,\hspace{2em}- 
                        a_{02} (3 + 10 {a_{11}}^2) {a_{20}}^2 + 
                          {a_{02}}^2 (218 + 221 {a_{11}}^2) a_{20})\Bigr), \\
      \end{align*}
     \begin{align*}
    k_{4,10} &= {a_{02}}^2 \bigl({a_{02}}^4 (46 + 29 {a_{11}}^2) + 
                {a_{02}}^2 (76 + 425 {a_{11}}^2 + 233 {a_{11}}^4)\bigr) \\
           &\hspace{1em}+ 
           a_{02} \bigl(4 {a_{02}}^4 (-11 + 6 {a_{11}}^2) - 
           {a_{02}}^2 (131 + 660 {a_{11}}^2 + 520 {a_{11}}^4)\bigr) a_{20} \\
            &\hspace{1em}+ {a_{20}}^2\bigl(-2 {a_{02}}^4 (1 + 
            28 {a_{11}}^2) + {a_{02}}^2 (1 + 34 {a_{11}}^2 + 38
             {a_{11}}^4)\bigr)\\
 &        \, \, \,\hspace{2em}
             - 3 (1 + {a_{11}}^2)^2 \bigl(45 + 2 {a_{11}}^2 (158 +
                     {a_{20}}^2)\bigr),\\
   k_{3,11} &= - a_{02} a_{11} \bigl(32 {a_{02}}^6 - 66 {a_{02}}^5 a_{20} - 
           6 a_{02} (1 + {a_{11}}^2) (19 + 64 {a_{11}}^2) a_{20} \\
            &\hspace{1em}+ 
              {a_{02}}^3 (366 + 485 {a_{11}}^2) a_{20}
            + 6 (1 + 5 {a_{11}}^2 + 4 {a_{11}}^4) {a_{20}}^2 \\
           &\hspace{2em}+ 
              {a_{02}}^4 (-322 - 385 {a_{11}}^2 + 34 {a_{20}}^2) + 
            {a_{02}}^2 (72 (1 + {a_{11}}^2) (7 + 18 {a_{11}}^2)
            - (29 + 52 {a_{11}}^2) {a_{20}}^2)\bigr),  \\
   k_{2,12} &= - {a_{02}}^2 \Bigl({a_{02}}^2 \bigl(50 + 8 {a_{02}}^4
             + 614 {a_{11}}^2 + 804 {a_{11}}^4 - 
              {a_{02}}^2 (62 + 157 {a_{11}}^2)\bigr) \\
                 &\hspace{3em}+ 
              2 a_{02} \bigl(1 - 8 {a_{02}}^4 - 107 {a_{11}}^2 - 168 {a_{11}}^4 + 
              {a_{02}}^2 (35 + 97 {a_{11}}^2)\bigr) a_{20} \\
               &\hspace{5em}+ \bigl(2 + 8 {a_{02}}^4 + 23 {a_{11}}^2 
              + 
              36 {a_{11}}^4 - 4 {a_{02}}^2 (2 + 7 {a_{11}}^2)\bigr) {a_{20}}^2
                        \Bigr), \\
   k_{1,13} &=  {a_{02}}^3 a_{11} \bigl(17 {a_{02}}^4 - 19 {a_{02}}^3 a_{20}
             + 2 a_{02} (7 + 72 {a_{11}}^2)
                 a_{20} \\
                &\hspace{3em}- 
                  4 (1 + 6 {a_{11}}^2) {a_{20}}^2 + 2 {a_{02}}^2
                  (-32 - 132 {a_{11}}^2 + {a_{20}}^2)\bigr), \\
   k_{0,14} &= -64 {a_{02}}^4 ({a_{02}}^2 
(-7 + 2 {a_{02}}^2 + 36 {a_{11}}^2) \\
&        \, \, \,\hspace{2em}- 
     4 a_{02} (-2 + {a_{02}}^2 + 6 {a_{11}}^2) a_{20}
     + (-1 + 2 {a_{02}}^2 
                + 
                   6 {a_{11}}^2) {a_{20}}^2).
 \end{align*}
We have the following theorem.
   \begin{theorem}
If $\n = - \e \times \tilde{\n}$,
   then the number of $\theta \in (0, \pi)$ that
    \begin{enumerate}
     \item  the
   first term of   
$\delta$ vanishes at $\theta$ is generically $7$,
   $5$, $3$, or $1$,
     \item
the first term of $k$ vanishes at $\theta$ is generically $14$,
    $12$, $10$, $8$, $6$, $4$, $2$ or $0$.	     
    \end{enumerate}
    \end{theorem}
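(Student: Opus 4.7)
The plan is to translate both vanishing questions into root-counting problems for one-variable real polynomials by substituting $s=\cos\theta/\sin\theta$, and then invoke the standard parity argument for real roots of a generic real polynomial.

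First I would observe that on the interval $(0,\pi)$ the map $\theta\mapsto s=\cos\theta/\sin\theta$ is a smooth bijection onto $\R$, and that $\sin\theta>0$ throughout this interval. Looking at the formula for the first term of $\delta$ (in the $\n=-\e\times\tilde\n$ case in \eqref{eq:delta2}), the parenthesized expression is homogeneous of degree $7$ in $(\cos\theta,\sin\theta)$, so dividing by $\sin^7\theta$ gives a polynomial $P_\delta(s)$ of degree at most $7$. The leading coefficient, read off from the $\cos^7\theta$ term, is $3(1+a_{11}^2)^2>0$, so $P_\delta$ has degree exactly $7$ for every choice of parameters. The vanishing of the first term of $\delta$ on $\theta\in(0,\pi)$ is therefore in bijection with the real roots of the degree-$7$ real polynomial $P_\delta(s)$.

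I would handle the first term of $k$ from \eqref{eq:k2} in exactly the same way: the inner sum $\sum_{i+j=14}k_{i,j}\cos^i\theta\sin^j\theta$ is homogeneous of degree $14$, so division by $\sin^{14}\theta$ yields a real polynomial $P_k(s)$ of degree at most $14$. The leading coefficient is $k_{14,0}=-3(1+a_{11}^2)^4\neq 0$, hence $P_k$ has degree exactly $14$, and the first term of $k$ vanishes at $\theta\in(0,\pi)$ precisely at the real roots of $P_k$.

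The conclusion then follows from the elementary fact that a real polynomial of degree $n$ has a number of real roots that is congruent to $n$ modulo $2$, because non-real roots come in complex-conjugate pairs; generically (i.e.\ off the discriminant hypersurface in the parameter space $(a_{20},a_{11},a_{02})$) all real roots are simple, so a degree-$7$ polynomial has $1$, $3$, $5$, or $7$ real roots, and a degree-$14$ polynomial has $0$, $2$, $4$, $6$, $8$, $10$, $12$, or $14$ real roots. I expect the routine but slightly delicate part to be the confirmation that the leading coefficients really are nonzero uniformly in the parameters (above they are easy: $3(1+a_{11}^2)^2$ and $-3(1+a_{11}^2)^4$ never vanish), and the interpretation of \emph{generic} as meaning away from the real discriminant locus of $P_\delta$ and $P_k$. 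No further computation beyond reading off these two leading coefficients is required.
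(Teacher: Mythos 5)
Your argument is correct and is essentially the paper's own proof, just written out in more detail: the paper likewise reduces to the degree-$7$ and degree-$14$ polynomials in $s=\cos\theta/\sin\theta$ obtained from \eqref{eq:delta2} and \eqref{eq:k2}, notes that the top coefficients $3(1+a_{11}^2)^2$ and $-3(1+a_{11}^2)^4$ never vanish since $1+a_{11}^2\neq0$, and concludes by the parity count of simple real roots. Your explicit handling of the nonvanishing prefactors and the bijection $\theta\mapsto s$ on $(0,\pi)$ only spells out what the paper leaves implicit.
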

\begin{proof}
By $1 + a_{11}^2 \neq 0$ and the formulas \eqref{eq:delta2} and \eqref{eq:k2},
we have the assertion.
\end{proof}

\section{Declarations}
\subsection{Funding and/or Conflicts of interests/Competing interests}
The author has no relevant financial or non-financial interests to
disclose and the author has no competing interests
to declare that are relevant to the content of this article.

\end{document}